\documentclass[11pt]{amsart}
\usepackage{amsmath}
\usepackage{amssymb}
\usepackage{mathtools}      
\usepackage{mathabx} 
\usepackage{bbm}	
\usepackage[normalem]{ulem} 
\usepackage{enumitem}
\usepackage{amsthm}
\usepackage{thmtools}
\usepackage[colorlinks=true,backref=page,bookmarksopen=true]{hyperref} 
\usepackage{caption} 
\usepackage{tikz}
\usetikzlibrary{patterns,decorations.pathreplacing}

\usepackage{xcolor}
\hypersetup{
    colorlinks,
    linkcolor={red!50!black},
    citecolor={blue!50!black},
    urlcolor={blue!80!black}
}

\renewcommand*{\backref}[1]{}

\renewcommand*{\backrefalt}[4]{\ \tiny 
  \ifcase #1 ({\color{red} \bf NOT CITED.})%
  \or    ($\uparrow$#2)%
  \else   ($\uparrow$#2)%
  \fi}



\declaretheorem{theorem}

\declaretheorem[sibling=theorem]{lemma}

\declaretheorem[sibling=theorem, style=remark]{remark}


\setcounter{tocdepth}{1}    
\setcounter{secnumdepth}{3}
\hypersetup{bookmarksdepth = 3} 

\setlist[enumerate,1]{label={\upshape(\alph*)},ref=\alph*}

 \newcommand{\Z}{\mathbb{Z}}  \newcommand{\R}{\mathbb{R}} 
\newcommand{\E}{\mathbf{E}}

\newcommand{\st}{\;\mathord{:}\;}


\newcommand{\GL}{\mathrm{GL}}
\DeclareMathOperator{\supp}{supp}

\renewcommand{\epsilon}{\varepsilon}
\renewcommand{\phi}{\varphi}
\renewcommand{\setminus}{\smallsetminus}
\renewcommand{\emptyset}{\varnothing}
\renewcommand{\angle}{\measuredangle}



\DeclareFontFamily{U} {MnSymbolA}{} 
\DeclareFontShape{U}{MnSymbolA}{m}{n}{
   <-6> MnSymbolA5
   <6-7> MnSymbolA6
   <7-8> MnSymbolA7
   <8-9> MnSymbolA8
   <9-10> MnSymbolA9
   <10-12> MnSymbolA10
   <12-> MnSymbolA12}{}
\DeclareFontShape{U}{MnSymbolA}{b}{n}{
   <-6> MnSymbolA-Bold5
   <6-7> MnSymbolA-Bold6
   <7-8> MnSymbolA-Bold7
   <8-9> MnSymbolA-Bold8
   <9-10> MnSymbolA-Bold9
   <10-12> MnSymbolA-Bold10
   <12-> MnSymbolA-Bold12}{}
\DeclareSymbolFont{MnSyA} {U} {MnSymbolA}{m}{n}
\DeclareFontFamily{U} {MnSymbolC}{}
\DeclareFontShape{U}{MnSymbolC}{m}{n}{
  <-6> MnSymbolC5
  <6-7> MnSymbolC6
  <7-8> MnSymbolC7
  <8-9> MnSymbolC8
  <9-10> MnSymbolC9
  <10-12> MnSymbolC10
  <12-> MnSymbolC12}{}
\DeclareFontShape{U}{MnSymbolC}{b}{n}{
  <-6> MnSymbolC-Bold5
  <6-7> MnSymbolC-Bold6
  <7-8> MnSymbolC-Bold7
  <8-9> MnSymbolC-Bold8
  <9-10> MnSymbolC-Bold9
  <10-12> MnSymbolC-Bold10
  <12-> MnSymbolC-Bold12}{}
\DeclareSymbolFont{MnSyC} {U} {MnSymbolC}{m}{n}

\DeclareMathSymbol{\top}{\mathord}{MnSyA}{219} 
\DeclareMathSymbol{\bot}{\mathord}{MnSyA}{217}
\DeclareMathSymbol{\smallplus}{\mathord}{MnSyC}{20} 
\DeclareMathSymbol{\smallminus}{\mathord}{MnSyC}{16} 
\DeclareMathSymbol{\smalltimes}{\mathord}{MnSyC}{21}
\DeclareMathSymbol{\smallpm}{\mathord}{MnSyC}{22} 
\DeclareMathSymbol{\smallmp}{\mathord}{MnSyC}{23}

\begin{document}

\title[The angle between Oseledets spaces]{On the distribution of the angle between Oseledets spaces}
\author{Jairo Bochi}
\address{Department of Mathematics, Penn State University}
\email{\href{mailto:bochi@psu.edu}{bochi@psu.edu}}
\author{Pablo Lessa}
\address{CMAT, Facultad de Ciencias, UdelaR}
\email{\href{mailto:lessa@cmat.edu.uy}{lessa@cmat.edu.uy}}
\date{November 12, 2025}
\subjclass[2020]{37D25; 60G42, 60G50}

\begin{abstract}
We study the distribution of the angles between Oseledets subspaces and their log-integrability, focusing on dimension $2$. For random i.i.d.\ products of matrices, we construct examples of probability measures on \(\mathrm{GL}_2(\R)\) with finite first moment where the Oseledets angle is not log-integrable. We also show that for probability measures with finite second moment the angle is always log-integrable. We then consider general measurable \(\mathrm{GL}_2(\R)\)-cocycles over an arbitrary ergodic automorphism of a non-atomic Lebesgue space, proving that no integrability condition on the matrix distribution ensures log-integrability of the angle. In fact, the joint distribution of the Oseledets spaces can be chosen arbitrarily. A similar flexibility result for bounded cocycles holds under an unavoidable technical restriction.  
\end{abstract}

\maketitle

\section{Introduction}

\subsection{The Oseledets splitting}

Let $T$ be an ergodic automorphism of a probability space $(\Omega,\mathcal{S},\mu)$, and let $F$ be a Borel measurable function from $\Omega$ to the matrix group $\mathrm{GL}_d(\R)$. Then there exists a unique matrix-valued function $F^{(n)}(\omega)$, where $\omega \in \Omega$ and $n \in \Z$, such that $F^{(0)}(\omega) = F(\omega)$ and 
\begin{equation}\label{e.cocycle_id}
F^{(m+n)}(\omega) = F^{(m)}(T^n \omega) F^{(n)}(\omega)
\end{equation}
for all $\omega \in \Omega$ and $n \in \Z$. 
Thus, if $n > 0$,
\begin{equation}
F^{(n)}(\omega) = F(T^{n-1} \omega) \cdots F(T\omega) F(\omega) \, .
\end{equation}
Relation \eqref{e.cocycle_id} is called the \emph{cocycle identity}.
We call the pair $(T,F)$ a \emph{measurable linear cocycle}. 
In a more general setting, cocycles form an important class of dynamical systems: see \cite[{\S}1.3.k]{HK}.
Here, we are concerned with the asymptotic behavior of the matrix products $F^{(n)}(\omega)$, for typical points~$\omega$.

Recall that the \emph{singular values} of a matrix $g \in \mathrm{GL}_d(\R)$ are the eigenvalues of the positive symmetric matrix $(g^\top g)^{1/2}$. We denote them, ordered, and repeated according to multiplicity, as $\mathbf{s}_1(g) \ge \cdots \ge \mathbf{s}_d(g)$. In terms of the operator Euclidean norm, $\mathbf{s}_1(g) = \|g\|$ and $\mathbf{s}_d(g) = \|g^{-1}\|^{-1}$. 

We say that the measurable linear cocycle $(T,F)$ is \emph{log-integrable} 
if each of the functions $\log \mathbf{s}_i(F)$ is integrable. Equivalently, 
\begin{equation}\label{e.log-integrable_cocycle}
\int_\Omega \log \max \big( \|F(\omega)\|, \|F(\omega)^{-1}\| \big) \, d\mu(\omega) < \infty 
\end{equation}
(note that the integrand is non-negative).
Under this condition, there exist numbers $\lambda_1 \ge \cdots \ge \lambda_d$, called the \emph{Lyapunov exponents}, such that for each $i \in \{1,\dots, d\}$ and $\mu$-a.e.~$\omega \in \Omega$,
\begin{equation}
\lim_{|n| \to \infty} \frac{1}{|n|} \log \mathbf{s}_i(F^{(n)}(\omega)) = \lambda_i \, ;
\end{equation}
this fact is due to Furstenberg and Kesten \cite{FK} in the i.i.d.\ case and follows from Kingman's subadditive ergodic theorem \cite{kingman} in general.
The Oseledets theorem \cite{oseledets} provides asymptotic information on the norms of the images  $F^{(n)}(\omega) v$ of vectors $v \in \R^d$. It states that for $\mu$-a.e.\ $\omega \in \Omega$, the sets 
\begin{equation}
\mathbf{E}_i(\omega) \coloneqq \left\{ v \in\R^d : \lim_{|n| \to \infty} \frac{1}{n} \log \| F^{(n)}(\omega) v\| = \lambda_i \text{ or } v=0\right\}, \  i=1, \dots ,d,
\end{equation}
are vector subspaces whose union spans $\R^d$ and satisfy $\mathbf{E}_i(\omega) \cap \mathbf{E}_j(\omega) = \{0\}$ whenever $\lambda_i \neq \lambda_j$.
Thus, by removing duplicates, we obtain a splitting of $\R^d$ which depends measurably on $\omega$.
The subspaces $\mathbf{E}_i(\omega)$ are called \emph{Oseledets spaces}.
They are invariant under the action of the cocycle: $F^{(n)}(\omega) \mathbf{E}_i(\omega) = \mathbf{E}_i(T^n \omega)$. 
More information on the Oseledets theorem can be found in the references \cite{arnold,barreira,ledrappier,Viana}. For geometric generalizations of Oseledets theorem, see \cite{filip}.

A positive measurable function $f$ on $\Omega$ is called \emph{tempered} if 
\begin{equation}
\lim_{|n| \to \infty} \frac{1}{n} \log f(T^n \omega) = 0 \quad \text{for $\mu$-a.e.~$\omega \in \Omega$.}
\end{equation}
Oseledets observed that the angles $\theta_{ij}$ between different subspaces $\mathbf{E}_i\neq \mathbf{E}_j$ of the Oseledets splitting (or, more generally, between transverse sums of Oseledets subspaces) are tempered functions. 
This is a simple consequence of the log-integrability hypothesis \eqref{e.log-integrable_cocycle}, together with the following geometrical fact:
\begin{equation}\label{e.paralleloram}
\big| \log \sin \theta_{ij}(T \omega) - \log \sin \theta_{ij}(\omega) \big| 
\le \log \|F(\omega)\| + \log \|F(\omega)^{-1}\| \, . 
\end{equation}
In fact, temperedness properties play a fundamental role in Pesin's theory of nonuniform hyperbolicity: see \cite{barreira}, \cite[Supplement]{KH}.

We say that a positive measurable function $f$ is \emph{log-integrable} if $\log f \in L^1(\mu)$.
Note that every log-integrable function is tempered, as an immediate consequence of the ergodic theorem. 
Thus, the following question arises: \emph{are the angles between Oseledets spaces actually log-integrable?} 
To the best of our knowledge, this question has not been addressed in print before.

It turns out that the question above has a negative answer. In fact, we will exhibit an example in the class of \emph{one-step cocycles}, which model products of random independent identically distributed matrices.

From now on, our discussion will be restricted to dimension $d=2$.

\subsection{Failure of log-integrability of the angle for products of random i.i.d.~matrices}\label{ss.onestep}

Suppose our dynamics is a two-sided Bernoulli shift, which we denote $\sigma$ instead of $T$.
This means that the probability space $(\Omega,\mathcal{S},\mu)$ is an infinite product $(A,\mathcal{T},\pi)^{\otimes \Z}$, where $(A,\mathcal{T},\pi)$ is another probability space, and $\sigma \colon \Omega \to \Omega$ is the shift map $(\sigma \omega)_n = \omega_{n+1}$.

Consider a measurable two-dimensional linear cocycle $(\sigma, F)$ over the shift. The cocycle is called \emph{one-step} if the function $F \colon \Omega \to \mathrm{GL}_2(\R)$ can be written in the form $F = \Phi \circ p$, where $p \colon \Omega \to A$ is the projection on the zeroth coordinate and $\Phi \colon A \to \mathrm{GL}_2(\R)$ is some Borel measurable function.

For a one-step cocycle $(\sigma, F)$ as above, if the point $\omega$ is sampled randomly according to Bernoulli measure, the factors of the product $F^{(n)}(\omega) = F(\sigma^{n-1} \omega) \cdots F(\omega)$ are independent and identically distributed matrices. The common distribution is the push-forward $\nu \coloneqq F_* \mu = \Phi_* \pi$, which is a Borel probability measure on the group $\mathrm{GL}_2(\R)$. 

Note that the measure $\nu$ contains all the relevant information about the one-step cocycle $(\sigma,F)$, since we would not lose anything in assuming that $A = \mathrm{GL}_2(\R)$, $\pi = \nu$, and $\Phi = \mathrm{id}$. 

For one-step cocycles, the log-integrability condition is expressed in terms of the matrix distribution $\nu$ as
\begin{equation}
\int_{\mathrm{GL}_2(\R)} \log \max \big( \|g\|, \|g^{-1}\| \big) \, d\nu(\omega) < \infty \, .
\end{equation}
The left-hand side will be called the \emph{first moment} of $\nu$.

Thus, if the first moment is finite, the Lyapunov exponents of the one-step cocycle are well-defined and finite; we denote them as $\lambda_1(\nu)$ and $\lambda_2(\nu)$ to emphasize their exclusive dependence on $\nu$. 
A classical theorem of Furstenberg says that $\lambda_1(\nu) \neq \lambda_2(\nu)$ except in a few exceptional situations that can be explicitly described (see \cite{ledrappier,DF}).

As announced above, non-log-integrable Oseledets angles are possible in this setting:

\begin{theorem}\label{t.nonintegrable}
There exists a probability measure on $\mathrm{GL}_2(\R)$ with finite first moment  such that the associated i.i.d.~product (one-step cocycle) has distinct Lyapunov exponents $\lambda_1(\nu) \neq \lambda_2(\nu)$ and the angle between the Oseledets directions is not log-integrable, that is,
\begin{equation}
\int_{\Omega}\left|\log\sin \angle(\mathbf{E}_1(\omega),\mathbf{E}_2(\omega)) \right|\, d\mu(\omega) = \infty \, .
\end{equation}
\end{theorem}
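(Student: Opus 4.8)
The plan is to build the measure $\nu$ concretely by choosing matrices that alternate between a strongly hyperbolic "expanding" regime and a "rotating" regime, with the rotations occurring rarely enough that the first moment is finite, but often enough—and in coordinated enough a way—that $\mathbf{E}_1$ is frequently forced to make a very small angle with $\mathbf{E}_2$. The cleanest way to arrange this is to let the group generated by $\supp \nu$ contain a fixed diagonal matrix $a = \mathrm{diag}(e^t, e^{-t})$ with $t > 0$ together with rare rotations. Between rotations the cocycle is governed by $a$, so the Oseledets direction $\mathbf{E}_1(\omega)$ reads off, at time $0$, the \emph{contracted} image of whatever direction was fed in by the most recent rotation in the past, while $\mathbf{E}_2(\omega)$ is the future-contracted direction determined by the first rotation in the future. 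So if the gap since the last past rotation is $k$ and $a$ has been applied $k$ times to a generic line, that line has been squeezed to within angle $\approx e^{-2tk}$ of the horizontal axis $\mathbf{E}_1$-candidate; symmetrically for $\mathbf{E}_2$ and the vertical axis going forward. Generically $\mathbf{E}_1$ ends up near the horizontal and $\mathbf{E}_2$ near the vertical, so the angle is bounded below—\emph{unless} the rotation in the past (or future) is chosen to rotate the horizontal near the vertical, i.e. by roughly $\pi/2$. The mechanism for non-log-integrability is therefore: use a rotation by an angle close to $\pi/2$, say one fixed rotation $r_{\pi/2}$, occurring with small probability at each step; then conditioned on the last past occurrence being at time $-k$ and the first future occurrence being far away, $\mathbf{E}_1(\omega)$ is within $e^{-2tk}$ of the \emph{vertical} axis, which is exactly where $\mathbf{E}_2(\omega)$ sits, so $\angle(\mathbf{E}_1,\mathbf{E}_2) \lesssim e^{-2tk}$ and $|\log \sin \angle| \gtrsim 2tk$.

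In more detail, I would let $(A,\pi)$ assign mass $1-p$ to the matrix $a$ and mass $p$ to $b \coloneqq r_{\pi/2} a$ (composing a near-$\pi/2$ rotation with the hyperbolic part keeps everything in one simple family and guarantees, via Furstenberg's criterion, that $\lambda_1(\nu) > \lambda_2(\nu) = -\lambda_1(\nu)$: the semigroup is not contained in a compact-extension of a reducible one, and $\lambda_1 \ge (1-p)t - p\cdot O(1) > 0$ for $p$ small). The first moment is trivially finite since $\supp\nu$ is a two-point set. The Oseledets directions at $\omega$ depend only on the \emph{future} of $\omega$ (for $\mathbf{E}_2$, the contracting direction) and only on the \emph{past} (for $\mathbf{E}_1$): indeed $\mathbf{E}_1(\omega)$ is the limit of $F^{(-n)}(\omega)^{-1}$ applied... more precisely, $\mathbf{E}_1(\omega) = \lim_{n\to\infty}$ of the most expanded direction of $F^{(n)}(\omega)$, but by invariance $\mathbf{E}_1(\omega) = F^{(n)}(\sigma^{-n}\omega)\,\mathbf{E}_1(\sigma^{-n}\omega)$, so it is determined by $\omega_{-1},\omega_{-2},\dots$; symmetrically $\mathbf{E}_2(\omega)$ is determined by $\omega_0,\omega_1,\dots$. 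Hence the pair $(\mathbf{E}_1(\omega),\mathbf{E}_2(\omega))$ is a function of the pair (past, future), and—crucially—these two halves are \emph{independent} under the Bernoulli measure. Now condition on the event $G_k$: the coordinates $\omega_{-1},\dots,\omega_{-k}$ all equal $a$, while $\omega_{-k-1} = b$ (probability $p(1-p)^k$); and simultaneously $\omega_0,\dots,\omega_{N}$ all equal $a$ for some large fixed $N$ (independent event, probability $(1-p)^{N+1}$). On the "future" part, $\mathbf{E}_2(\omega)$ is within $C e^{-2tN}$ of the vertical line $\R e_2$. On the "past" part, $\mathbf{E}_1(\omega) = a^k \cdot (\text{line fed in at time } -k\text{ by the } b \text{ step})$; since $b = r_{\pi/2}a$ sends a generic line to a line making a definite angle with $\R e_1$—in particular, a line bounded away from $\R e_1$ with positive conditional probability $q>0$ over the still-earlier coordinates—after applying $a^k$ it is squeezed to within $C e^{-2tk}$ of $\R e_1$... wait: I want it near $\R e_2$, the contracting axis. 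The correct bookkeeping: $a^k$ pushes any line not equal to $\R e_2$ \emph{toward} $\R e_1$. So generically $\mathbf{E}_1(\omega)$ is near $\R e_1$ and $\mathbf{E}_2(\omega)$ near $\R e_2$, giving a \emph{large} angle. To make the angle small I instead want $\mathbf{E}_1$ near $\R e_2$, which forces the pre-image line to be \emph{exactly} (not approximately) near $\R e_2$ after the $b$-step and before the $a^k$-block—impossible to arrange with positive probability by a single rotation.

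This shows the naive construction must be refined, and I expect \textbf{this to be the main obstacle}: the event making the angle small has to be engineered, not hoped for. The fix is to use a \emph{two-sided coordination}. Replace the single rotation $b$ by a rotation through an angle $\alpha$ that is \emph{itself random}, drawn so that $-\log|\alpha - \pi/2|$ has infinite expectation but $\log\|\cdot\|$ stays finite (e.g. $\alpha = \pi/2 + X$ with $\P(|X| < e^{-n}) \sim 1/(n\log^2 n)$); a rotation near $\pi/2$ sends $\R e_1$ to near $\R e_2$. Then on the event that coordinate $\omega_{-1}$ is this near-$\pi/2$ rotation while $\omega_{-2},\dots,\omega_{-k}$ equal $a$ and the far past (before $-k$) pushed the incoming line to within $e^{-2tk}$ of $\R e_1$, the image under the rotation is within $e^{-2tk} + |\alpha - \pi/2|$ of $\R e_2$, hence $\mathbf{E}_1(\omega)$ is that close to $\R e_2$, while independently $\mathbf{E}_2(\omega)$ is within $e^{-2tN}$ of $\R e_2$; so $\angle(\mathbf{E}_1,\mathbf{E}_2) \le C(e^{-2tk} + e^{-2tN} + |\alpha - \pi/2|)$. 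Integrating $|\log\sin\angle|$, the contribution from the $|\alpha - \pi/2|$ term alone diverges because $-\log|\alpha - \pi/2|$ was arranged to be non-integrable, while all other terms (the $\log\|F\|$-type contributions) remain controlled by the finite first moment via the tempered/parallelogram estimate \eqref{e.paralleloram}. The remaining routine points are: verifying $\lambda_1(\nu) \neq \lambda_2(\nu)$ (Furstenberg, using that the rotations prevent reducibility and compactness), checking the first moment is finite (the $\log$-norm of a near-$\pi/2$ rotation is $O(1)$, so this is immediate), and making the "far past pushes the incoming line near $\R e_1$" step precise via the standard contraction estimate for products of $\mathrm{diag}(e^t,e^{-t})$ with occasional bounded perturbations.
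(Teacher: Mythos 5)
There is a genuine gap, and it sits exactly where you flagged the ``main obstacle'': your refined construction cannot work, for a structural reason. All the matrices in your final measure (the fixed hyperbolic $a=\mathrm{diag}(e^t,e^{-t})$ and the matrices $r_\alpha a$ with $\alpha$ random near $\pi/2$) have uniformly bounded norm, so $\nu$ has finite moments of \emph{every} order, in particular a finite second moment; by Theorem~\ref{t.integrable} of this paper such a measure always has log-integrable Oseledets angle, so no choice of the law of $\alpha$ can produce the desired example. You can also see the failure directly in your own estimate: you bound $\sin\angle(\mathbf{E}_1,\mathbf{E}_2)\le C\bigl(e^{-2tk}+e^{-2tN}+|\alpha-\pi/2|\bigr)$, hence
\begin{equation}
\bigl|\log\sin\angle(\mathbf{E}_1,\mathbf{E}_2)\bigr| \;\ge\; -\log C-\log\bigl(e^{-2tk}+e^{-2tN}+|\alpha-\pi/2|\bigr)
\;\le\; -\log C+2tN ,
\end{equation}
where $N$ is the waiting time until the first rotation in the future. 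Since $N$ is geometric, $2tN$ is integrable, so the quantity your conditioning controls is capped by an integrable random variable: the ``contribution from the $|\alpha-\pi/2|$ term alone'' does \emph{not} diverge, because making $\mathbf{E}_1$ extremely close to the line $\R e_2$ is useless unless $\mathbf{E}_2$ is \emph{simultaneously} that close to $\R e_2$, and by the past/future independence you correctly invoked, the distance of $\mathbf{E}_2$ to $\R e_2$ is of order $e^{-2tN}$, an independent, light-tailed quantity. In short, with bounded matrices the angle can only be as small as $e^{-ct\cdot(\text{waiting time})}$, and waiting times have all moments.

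The mechanism that actually works (and is what the paper does) is to put the heavy tail into $\log\|g\|$ itself, keeping the first moment finite but the second infinite. Concretely, the paper takes upper-triangular matrices $F(\omega)=\begin{pmatrix} e^{-1} & e^{\psi(\omega_0)}\\ 0 & 1\end{pmatrix}$ with $\psi\ge 0$, $\int\psi\,d\pi<\infty$, $\int\psi^2\,d\pi=\infty$. Then $\mathbf{E}_2\equiv\R e_1$, while by Lemma~\ref{l.triangular} the slope variable $X(\omega)=\sum_{n\ge0}e^{\psi(\omega_{-n-1})-n}=\cot\angle(\mathbf{E}_1,\mathbf{E}_2)$ satisfies $\log X\ge \sup_{n\ge0}\bigl[\psi(\omega_{-n-1})-n\bigr]$, and Lemma~\ref{l.double-edged} shows this supremum is integrable precisely when $\psi\in L^2$; here a single unbounded kick $\psi(\omega_{-n-1})$ overwhelms the linear recovery rate $n$, which is exactly what your geometrically distributed gaps cannot do. So the fix for your argument is not a cleverer coordination of rotations but a distribution of matrix norms with finite first and infinite second moment; otherwise Theorem~\ref{t.integrable} is an absolute barrier.
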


The example is entirely explicit and can be found in Subsection~\ref{ss.iid_proofs}.

\subsection{A criterion for log-integrability}

Our next result provides a sufficient criterion for the log-integrability of the Oseledets angles, in the same random i.i.d.~setting. 

Let us say that a Borel probability measure $\nu$ on $\mathrm{GL}_2(\R)$ has \emph{finite second moment} if
\begin{equation}
\int_{\mathrm{GL}_2(\R)} \big[ \log \max ( \|g\|, \|g^{-1}\| ) \big]^2 \, d\nu(\omega) < \infty \, .
\end{equation}

\begin{theorem}\label{t.integrable}
Suppose $\nu$ is a probability measure on  $\mathrm{GL}_2(\R)$ with finite second moment such that the associated i.i.d.~product (one-step cocycle) has distinct Lyapunov exponents $\lambda_1(\nu) \neq \lambda_2(\nu)$.
Then the angle between the Oseledets direction is log-integrable, that is,
\begin{equation}
\int_{\Omega}\left|\log\sin \angle(\mathbf{E}_1(\omega),\mathbf{E}_2(\omega)) \right|\, d\mu(\omega) < \infty \, .
\end{equation}
\end{theorem}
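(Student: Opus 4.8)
The plan is to realize $\mathbf{E}_1(\omega)$ as the image of a random direction pushed forward by the forward product and $\mathbf{E}_2(\omega)$ as the image of a random direction pushed by the backward product, and then to control the angle between them using a quantitative estimate on how fast the forward iterates contract the projective line towards $\mathbf{E}_1$. Concretely, identify directions in $\R^2$ with points of the projective line $\P^1$, equipped with the metric $d([v],[w]) = |\sin\angle(v,w)|$. Under the hypothesis $\lambda_1(\nu) > \lambda_2(\nu)$, for $\mu$-a.e.\ $\omega$ the forward iterates $g_n \coloneqq F^{(n)}(\omega)$ act on $\P^1$ as contractions converging to the constant map $\mathbf{E}_1(T^n\omega)$; symmetrically, the backward iterates determine $\mathbf{E}_2(\omega)$. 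The angle $\angle(\mathbf{E}_1(\omega),\mathbf{E}_2(\omega))$ will then be estimated by comparing, for a well-chosen scale $n$, the position of $\mathbf{E}_1(\omega)$ (approximated by $g_n^{-1}$ applied to some fixed direction, using the backward cocycle generated by the past) against $\mathbf{E}_2(\omega)$.

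First I would recall the Benoist--Quint large-deviation machinery cited in the paper: for a measure $\nu$ with finite second moment and $\lambda_1(\nu) > \lambda_2(\nu)$, there are exponential large-deviation bounds for $\frac{1}{n}\log\frac{\mathbf{s}_1(g_n)}{\mathbf{s}_2(g_n)}$ around $\lambda_1 - \lambda_2 > 0$, and, crucially, a second-moment bound on the regularity of the stationary measure and on the rate of convergence of $g_n \cdot [v]$ to $\mathbf{E}_1(T^n\omega)$. The key step is the following: the distance from $\mathbf{E}_1(\omega)$ to the "most expanded direction" of $g_n^{-1}$ (equivalently the least expanded direction of $g_n$) is at most $\mathbf{s}_2(g_n)/\mathbf{s}_1(g_n)$ times a tempered factor, so
\[
-\log\sin\angle(\mathbf{E}_1(\omega),\mathbf{E}_2(\omega))
\le \big(\log\mathbf{s}_1(g_n)-\log\mathbf{s}_2(g_n)\big) + O\!\big(\text{tempered / lower-order terms}\big),
\]
for a suitable stopping scale $n = n(\omega)$. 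One chooses $n(\omega)$ to be, roughly, the first time the gap $\log\mathbf{s}_1(g_n)-\log\mathbf{s}_2(g_n)$ exceeds a small multiple of $|\log\sin\angle(\mathbf{E}_1,\mathbf{E}_2)|$; the large-deviation estimate guarantees $n(\omega)$ has exponential tails, and the second moment of $\nu$ is precisely what is needed to turn the $L^1$ control of single-step fluctuations into an $L^1$ (indeed better) control of $\log\mathbf{s}_1(g_{n(\omega)})-\log\mathbf{s}_2(g_{n(\omega)})$ via a Wald-type / optional-stopping argument. Integrating the displayed inequality over $\omega$ then yields $\int |\log\sin\angle(\mathbf{E}_1,\mathbf{E}_2)|\,d\mu < \infty$.

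An alternative and perhaps cleaner route, which I would pursue in parallel, is to use the cocycle relation \eqref{e.paralleloram} for the angle together with the Oseledets/Furstenberg formula $\lambda_1 - \lambda_2 = \int \log\frac{\|F(\omega)\,u(\omega)\|}{\|F(\omega)\,u'(\omega)\|}\,d\mu$ where $u,u'$ are unit vectors spanning $\mathbf{E}_1,\mathbf{E}_2$: one shows that along a forward orbit the quantity $\log\sin\angle(\mathbf{E}_1(T^n\omega),\mathbf{E}_2(T^n\omega))$ performs a process whose drift is controlled and whose increments are dominated by $\log\|F\|+\log\|F^{-1}\|$, which lies in $L^2$. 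The finite-second-moment hypothesis then feeds into a maximal-type / Borel--Cantelli argument (in the spirit of the quantitative Oseledets theorems that use $L^2$ control of the fluctuations, e.g.\ Benoist--Quint) to show that the random variable $\log\sin\angle$ cannot have a heavy tail — its tail is square-summable along the orbit, forcing $\log\sin\angle \in L^1$.

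The main obstacle I anticipate is the same one visible in Theorem~\ref{t.nonintegrable}: with only a first moment the angle can fail to be log-integrable, so the argument must genuinely exploit the second moment, not merely finiteness of $\lambda_1 - \lambda_2$. Technically this means the delicate point is the passage from $L^2$ control of \emph{single-step} quantities ($\log\|F\|$, $\log\|F^{-1}\|$) to an $L^1$ (or tail) bound on the \emph{accumulated} gap $\log\mathbf{s}_1(g_{n(\omega)}) - \log\mathbf{s}_2(g_{n(\omega)})$ evaluated at the random stopping time $n(\omega)$ — one must show the stopping time is not so large, and not so correlated with the increments, that Wald's identity fails. Handling this correlation, and verifying that the Benoist--Quint large-deviation bounds indeed hold under a mere second moment (rather than a finite exponential moment), is where the real work lies; the rest is bookkeeping with the projective contraction estimates.
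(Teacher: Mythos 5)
Your proposal has two genuine gaps, and they sit exactly where the theorem's difficulty lies. First, the quantitative machinery you want to invoke is not available under the stated hypotheses. Exponential large-deviation bounds for $\tfrac{1}{n}\log\big(\mathbf{s}_1/\mathbf{s}_2\big)(F^{(n)})$ require finite exponential moments; with only a finite second moment one gets Chebyshev-type (polynomial) bounds, so your stopping time $n(\omega)$ need not have exponential tails and the Wald/optional-stopping step is unsupported. Moreover, the Benoist--Quint results that do hold under a second moment --- and which the paper actually uses, namely the continuity of $\xi\mapsto\int|\log\sin\angle(\mathbf{E}_1(\omega),\xi)|\,d\mu$ from \cite[Proposition~4.5]{benoist-quint} --- require strong irreducibility and proximality of $\Gamma_\nu$, whereas the theorem only assumes $\lambda_1(\nu)\neq\lambda_2(\nu)$. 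Your projective-contraction scheme therefore fails precisely in the reducible cases (e.g.\ a common invariant line, triangular matrices), which is exactly the regime where the counterexample of Theorem~\ref{t.nonintegrable} lives and where the second-moment hypothesis must be exploited by hand. The paper's proof splits accordingly: in the strongly irreducible case it combines the Benoist--Quint continuity (hence boundedness on the compact $\R\mathrm{P}^1$) with the observation that for one-step cocycles $\mathbf{E}_1$ depends only on the past coordinates and $\mathbf{E}_2$ only on the future ones, so independence and Fubini finish the job with no stopping times at all; in the case of a single invariant line it writes the cocycle in triangular form and proves integrability of $\log^{\smallplus}|X|$ with $X=\cot\angle(\mathbf{E}_1,\mathbf{E}_2)$ directly, via Lemma~\ref{l.double-edged} (integrability of $\sup_n[\psi(\omega_{-n-1})-n]$ iff $\psi\in L^2$) together with the Kiefer--Wolfowitz theorem on the supremum of a random walk with negative drift and square-integrable increments. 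None of these ingredients is present in your sketch, and the one you rely on does not hold in the needed generality.

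Second, your ``alternative route'' is not a proof either: bounding the increments of $\log\sin\angle$ along an orbit by $\log\|F\|+\log\|F^{-1}\|\in L^2$ yields only almost-everywhere, temperedness-type statements about orbits; it does not bound the tail of the distribution of $\log\sin\angle$ under $\mu$. The whole point of the paper is that tempered does not imply log-integrable, and Theorem~\ref{t.nonintegrable} shows that orbitwise control is compatible with a non-integrable angle, so the inference ``tail square-summable along the orbit, hence $\log\sin\angle\in L^1$'' is invalid as stated. To salvage your plan you would at minimum need (i) a separate treatment of the reducible cases, where something like Lemma~\ref{l.double-edged} plus Kiefer--Wolfowitz is genuinely required, and (ii) in the irreducible case, a tool valid under a mere second moment --- e.g.\ the Benoist--Quint continuity result combined with the past/future independence of $(\mathbf{E}_1,\mathbf{E}_2)$ --- in place of the large-deviation/stopping-time argument.
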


The proof is also given in Subsection~\ref{ss.iid_proofs}.
In fact, the ``generic'' case in Theorem~\ref{t.integrable} follows immediately from results of \cite{benoist-quint}, and our proof consists of an analysis of the ``exceptional'' cases.

\subsection{Flexibility of Oseledets data for measurable cocycles}

We now come again to the general setting of measurable linear cocycles over ergodic automorphisms. Let us  assume that the underlying probability space is a Lebesgue space, and is non-atomic (since otherwise the dynamics reduces to a single periodic orbit). 

We ask whether Theorem~\ref{t.integrable} can be extended to this setting. 
We will show that the answer is negative: no integrability condition on the cocycle guarantees log-integrability of the Oseledets angles. 

In fact, we will show more: there is no restriction on the joint distribution of the Oseledets spaces.
Let us denote by $\mathsf{X}$ the product of two copies of projective space $\R\mathrm{P}^1$ minus the diagonal, that is,
\begin{equation}\label{e.X}
\mathsf{X} \coloneqq \big\{(x_1,x_2) \in \R\mathrm{P}^1 \times \R\mathrm{P}^1  \st x_1 \neq x_2 \big\} \, .
\end{equation}

\begin{theorem}\label{t.flexible}
Let \(T\) be an ergodic automorphism of a non-atomic Lebesgue space \((\Omega,\mathcal{S},\mu)\).  Let \(N:\GL_2(\R) \to \R\) be a locally bounded function such that
\begin{equation}\label{e.lowerN}
  N(g) \ge \log \max(\|g\|,\|g^{-1}\|).
\end{equation}
Let \(r_1 > r_2\) be real numbers and let \(\eta\) be a Borel probability measure on $\mathsf{X}$. 
Then there exists a measurable map \(F:\Omega \to \GL_2(\R)\) such that
\begin{equation}\label{e.super-integrability}
  \int_{\Omega} N(F(\omega)) \, d\mu(\omega) < \infty,
\end{equation}
the associated Lyapunov exponents are \(r_1,r_2\), and the Oseledets spaces \(\E_1,\E_2\) have joint distribution \(\eta\), that is, the push-forward of \(\mu\) under the map  \(\omega \mapsto (\E_1(\omega),\E_2(\omega))\) is \(\eta\). 
\end{theorem}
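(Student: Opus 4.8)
The plan is to build $F$ as a coboundary-type perturbation of a constant diagonal cocycle, using a Rokhlin tower to install the prescribed geometry one step at a time while keeping the integrability cost summable. First I would fix the diagonal matrix $D = \mathrm{diag}(e^{r_1}, e^{r_2})$, whose associated constant cocycle has Lyapunov exponents $r_1 > r_2$ and Oseledets directions the two coordinate axes $e_1, e_2 \in \R\mathrm{P}^1$ at every point. The idea is then to find a measurable conjugacy $C : \Omega \to \GL_2(\R)$ so that the conjugated cocycle $F(\omega) = C(T\omega)\, D\, C(\omega)^{-1}$ has Oseledets spaces $\E_i(\omega) = C(\omega) \langle e_i\rangle$; for this to realize the target joint distribution $\eta$, I want the map $\omega \mapsto (C(\omega)\langle e_1\rangle, C(\omega)\langle e_2\rangle) \in \mathsf{X}$ to push $\mu$ forward to $\eta$. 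Since $\GL_2(\R)$ acts transitively on $\mathsf{X}$ (any ordered pair of distinct lines can be mapped to the pair of axes), such a $C(\omega)$ exists pointwise, and one can choose it measurably; moreover since $(\Omega,\mathcal S,\mu)$ is a non-atomic Lebesgue space one can first choose a measurable map $\Omega \to \mathsf X$ with law $\eta$, then lift it to $C$. Conjugation does not change Lyapunov exponents or the Oseledets splitting abstractly, so the only thing left to control is \eqref{e.super-integrability}.

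The obstruction is that a raw conjugacy $C(\omega) D C(\omega)^{-1}$ need not satisfy any integrability bound: $\log\|C(T\omega)\| + \log\|C(\omega)^{-1}\|$ can blow up arbitrarily fast along orbits, and $N$ is only assumed locally bounded, so large matrix entries are arbitrarily expensive. The remedy is to interpolate slowly. I would use a Rokhlin–Halmos tower: for a large integer $M$, there is a measurable set $B$ with $\mu(B) \approx 1/M$ whose first $M$ iterates $B, TB, \dots, T^{M-1}B$ are disjoint and nearly partition $\Omega$. On the "bulk" of the tower the cocycle is just $D$; only near the top and bottom of the tower do we splice in the change of frame, spreading the conjugation $C$ across many steps so that each individual step moves the frame by a bounded (hence $N$-cheap) amount. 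Concretely, on the base $B$ one sets up the frame, transports it up the column with cheap steps, and at the top returns to a standard frame; the cost of each such correction step is bounded by a constant depending only on $N$ and on a compact set chosen according to $\eta$ via a truncation argument. Summing the costs over a single column gives $O(1)$ per column, and there are $\approx \mu(\Omega)$ columns' worth of mass, so the total integral $\int_\Omega N(F)\,d\mu$ is finite. One must also verify that this tower construction does not disturb the Lyapunov exponents (the correction steps contribute zero exponential growth because there are boundedly many of them of bounded size per period, and by ergodicity the long-run growth rate is still governed by $D$) nor the Oseledets directions seen at $\mu$-typical points (away from a small, and with $M$ large arbitrarily small, portion of phase space the Oseledets line at $\omega$ is exactly $C(\omega)\langle e_i\rangle$, with $C$ chosen to have law $\eta$ on that portion).

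The main technical point — and the step I expect to be the real obstacle — is arranging the interpolation so that \emph{simultaneously}: (i) each step has bounded $N$-cost; (ii) the pushforward of $\mu$ under $\omega \mapsto (\E_1(\omega),\E_2(\omega))$ is \emph{exactly} $\eta$, not merely approximately; and (iii) the exponents come out exactly $r_1, r_2$. Getting (ii) exact while (i) forces a compact truncation suggests an exhaustion: write $\eta = \sum_k c_k \eta_k$ with $\eta_k$ supported on compact subsets $K_k \subset \mathsf X$ and $c_k > 0$, $\sum c_k = 1$, partition $\Omega$ accordingly into invariant-free pieces of measure $c_k$, and on the $k$-th piece run the tower construction with a frame-bound adapted to $K_k$; the per-column cost is then bounded on each piece by a constant $L_k$, and choosing the tower height $M_k$ on the $k$-th piece large enough (growing with $L_k$) keeps $\sum_k c_k L_k / M_k < \infty$. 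Finally, for the exponents, a standard trick is to work on $\Omega \times \{0,1,\dots\}$ or simply to note that the correction steps can be taken to be rotations and shears of determinant $1$ and uniformly bounded norm, so they are absorbed into the tempered error and Kingman's theorem still yields $r_1, r_2$; if an exact match is wanted one can further conjugate by a tempered scalar. The bounded-cocycle flexibility statement alluded to in the abstract would follow from the same scheme by additionally bounding the diagonal part, at the price of a hypothesis on the distribution of angles.
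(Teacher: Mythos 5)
There is a genuine gap, in fact two, and both sit exactly at the points you flag as "the real obstacle."

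First, exactness of the distribution $\eta$. Your scheme keeps the cocycle equal to $D$ (or to a fixed conjugate $C D C^{-1}$) on the bulk of each column and spreads the change of frame over many individually cheap steps near the column ends. But the Oseledets spaces obey $\E_i(T\omega)=F(\omega)\,\E_i(\omega)$ at \emph{every} point, so on the interpolation region --- a positive-measure part of phase space --- the frames are forced by invariance to be the intermediate frames of your interpolation path; they are not freely assignable, and nothing makes their contribution to the law of $(\E_1,\E_2)$ fit the prescribed $\eta$. (Worse, if the bulk cocycle is literally $D$, the frames on the bulk are either the coordinate axes or get pushed along the column by the projective action of $D$, so they are not $\eta$-distributed either.) Your wording "away from a small portion of phase space" concedes only an approximate realization; the theorem requires the pushforward to be exactly $\eta$. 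The paper sidesteps this by \emph{not} asking each step to be cheap: since only the average \eqref{e.super-integrability} must be finite, the entire frame change is performed in a single step of finite (by local boundedness of $N$ on compact frame sets) but possibly large cost, made rare by using towers of height $k_n$ with $C_n/k_n$ small (Theorem~\ref{t.lowcost}, built on the Alpern--Prasad skyscraper theorem, which is also what lets one serve countably many compact pieces of $\eta$ with different height requirements simultaneously; independent Rokhlin towers "run on" the non-invariant pieces of a partition of $\Omega$, as you propose, are not a well-defined tool). The frame map $f$ is constant along each column and has law exactly $\eta$ by construction, so there is no leftover region. Per-step cheapness is precisely the extra constraint appearing in the bounded-cocycle Theorem~\ref{t.boundedflexible}, where the bounded-gap hypothesis is what makes it possible; importing it into Theorem~\ref{t.flexible} is what creates your exactness problem.

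Second, exactness of the exponents. With constant $D$ on the bulk and bounded corrections occurring with positive frequency, the exponents are in general only close to $r_1,r_2$: Kingman's theorem does not "absorb" a positive-frequency bounded perturbation, and your proposed repairs fail --- conjugating by a scalar changes nothing, while multiplying by a scalar cocycle shifts both exponents by the same amount and cannot correct two independent errors. (Also, the preliminary claim that a merely measurable conjugacy preserves exponents and the splitting is false; this holds for tempered conjugacies, and its failure is the very reason integrability must be built in from the start.) The paper obtains exact exponents through a normalization device you would need some version of: the frame-transport matrices $\widetilde{\Phi}$ send unit vectors to unit vectors, and the expansion along the invariant lines is prescribed by continuous, compactly supported $\psi_1,\psi_2$ on $\mathsf{X}$ with $\int\psi_j\,d\eta=r_j$, so that $\tfrac1n\log\big\|F^{(n)}(\omega)|_{\E_j(\omega)}\big\|$ is exactly a Birkhoff average of $\psi_j\circ f$ and converges to $r_j$ in both time directions. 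Without such a mechanism, your construction realizes the exponents and the distribution only approximately, so the proposal as written does not prove the theorem.
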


Thus, in the setting of two-dimensional measurable cocycles,  Oseledets data is \emph{flexible} in the sense of Katok's flexibility program \cite{BKRH}.

\subsection{The case of bounded cocycles}

In the setting of the previous subsection, we say the map $F \colon \Omega \to \mathrm{GL}_2(\R)$ is \emph{bounded} if its image $F(\Omega)$ is a relatively compact subset of $\mathrm{GL}_2(\R)$, and \emph{essentially bounded} if \(\supp(F_*\mu)\) is compact. In the latter case, we can alter $F$ on a zero-measure subset so that it becomes bounded.  

We now consider the flexibility problem in the class of bounded $\mathrm{GL}_2(\R)$-valued cocycles.  
There is an obvious restriction, 
which we will now describe.

We say that a probability measure on the space $\mathsf{X}$ (defined in \eqref{e.X}) has \emph{unbounded gap} if for every $\epsilon>0$, there exists two positive measure sets $A$ and $B \subseteq \mathsf{X}$ such that $\eta(A)>0$, $\eta(B)>0$, $\eta(A \cup B) = 1$, and
\begin{equation}
\sin \angle(x_1,x_2) < \epsilon \sin\angle(y_1,y_2) \quad \text{for all $(x_1,x_2)\in A$ and $(y_1,y_2)\in B$.}
\end{equation}
Otherwise, we say that $\eta$ has \emph{bounded gap}.

The distribution of the Oseledets angles of a bounded \(\GL_2(\R)\)-valued cocycle has bounded gap: this follows from inequality~\eqref{e.paralleloram} and the assumption that $T$ is ergodic.
We show that this is the only restriction.

\begin{theorem}\label{t.boundedflexible}
Let \(T\) be an ergodic automorphism of a non-atomic Lebesgue space \((\Omega,\mathcal{S},\mu)\).  
Let \(r_1 > r_2\) be real numbers and let \(\eta\) be a Borel probability measure on $\mathsf{X}$ and with bounded gap. 
Then there exists a bounded measurable map \(F:\Omega \to \GL_2(\R)\) such that the cocycle $(T,F)$ has Lyapunov exponents \(r_1,r_2\), and the Oseledets spaces \(\E_1,\E_2\) have joint distribution \(\eta\). 
\end{theorem}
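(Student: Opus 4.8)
The plan is to follow the strategy behind Theorem~\ref{t.flexible}, adding what is needed to keep the cocycle bounded. First I reduce to a one-dimensional statement about angles. Pick any measurable $\phi\colon\Omega\to\mathsf{X}$ with $\phi_*\mu=\eta$, write $(\E_1(\omega),\E_2(\omega))=\phi(\omega)$, choose measurably unit vectors $u_i(\omega)\in\E_i(\omega)$, and look for cocycles of the form $F(\omega)u_i(\omega)=c_i(\omega)u_i(T\omega)$ with $c_1,c_2\colon\Omega\to(0,\infty)$ measurable. Such an $F$ automatically preserves the line fields $\E_1,\E_2$, and since $\|F^{(n)}(\omega)u_i(\omega)\|=\prod_{k=0}^{n-1}c_i(T^k\omega)$, Birkhoff's ergodic theorem shows that if $\log c_i\in L^1(\mu)$ then the exponent along $\E_i$ equals $\int\log c_i\,d\mu$; when these two numbers are distinct, $\E_1,\E_2$ are exactly the Oseledets spaces. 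It remains to arrange $\int\log c_i\,d\mu=r_i$ and $F$ bounded. A short computation gives $\|F(\omega)\|\le(c_1(\omega)+c_2(\omega))/\sin\angle\phi(\omega)$ and $\|F(\omega)^{-1}\|\le(c_1(\omega)^{-1}+c_2(\omega)^{-1})/\sin\angle\phi(T\omega)$; moreover, when $c_1(\omega)=c_2(\omega)$ one has the sharper estimate that $\max(\|F(\omega)\|,\|F(\omega)^{-1}\|)$ is controlled purely in terms of $c_1(\omega)$ and the ratio $\sin\angle\phi(T\omega)/\sin\angle\phi(\omega)$ together with its reciprocal. Hence $F$ is bounded provided (i) $\omega\mapsto\bigl|\log\sin\angle\phi(T\omega)-\log\sin\angle\phi(\omega)\bigr|$ is essentially bounded, and (ii) $c_1=c_2$ on the set where $\sin\angle\phi(\omega)$ or $\sin\angle\phi(T\omega)$ falls below a suitable threshold $\delta>0$. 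Condition (ii) together with the moment constraint is easy to meet: since $\mu\{\sin\angle\phi<\delta\}\to0$ as $\delta\to0$, fix $\delta$ with $A\coloneqq\{\sin\angle\phi(\omega)\ge\delta\}\cap\{\sin\angle\phi(T\omega)\ge\delta\}$ of positive measure and put $\log c_i=\tfrac12(r_1+r_2)\pm\tfrac12\,\mathbbm{1}_A\,(r_1-r_2)/\mu(A)$. So the whole theorem reduces to the statement $(\ast)$: there is a measurable $\phi\colon\Omega\to\mathsf{X}$ with $\phi_*\mu=\eta$ for which $\log\sin\angle\phi$ has essentially bounded increments along $T$-orbits.

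The ``bounded gap'' hypothesis is precisely the solvability criterion for $(\ast)$. Its necessity is the remark already in the paper: a bounded cocycle forces $|\log\sin\angle\phi(T\omega)-\log\sin\angle\phi(\omega)|\le 2\log K$ by \eqref{e.paralleloram}, so if $\eta$ had unbounded gap, taking $C>2\log K$ and the corresponding partition $\mathsf{X}=A\sqcup B$ would make $\{\log\sin\angle\phi>\sup_A\log\sin\angle\}$ a nontrivial $T$-invariant set mod $0$ (because $\log\sin\angle\phi$ omits the width-$C$ band between the $A$- and $B$-values), contradicting ergodicity. For sufficiency, let $C$ be a bounded-gap constant, so that the support $S$ of $(\log\sin\angle)_*\eta$ is $C$-connected: any two of its points are joined by a finite chain in $S$ with consecutive gaps $\le C$ (after enlarging $C$, each step can be taken to make definite progress toward the target, giving linear bounds on chain lengths). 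I would then build $\phi$ on a Kakutani skyscraper: by Rokhlin's lemma pick a base $\Omega_0$ whose first-return time $\tau$ is everywhere $\ge M$, with $M$ arbitrarily large; over each column $\{x,Tx,\dots,T^{\tau(x)-1}x\}$ prescribe, in the $\log\sin\angle$-coordinate, a $C$-Lipschitz path inside $S$ that starts and ends near $0$ (so consecutive columns glue with a jump $\le C$) and is otherwise free; since the columns are long, the empirical distribution of $\phi$ over each can be pushed arbitrarily close to $\eta$, and summing over columns realizes $\eta$ up to small error.

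The main obstacle is passing from ``up to small error'' to the exact identity $\phi_*\mu=\eta$ without destroying the Lipschitz constraint or the compatibility between the top of one column and the base of the next, a feedback that must be handled with care; this is exactly the technical point met in the proof of Theorem~\ref{t.flexible}, and I would dispose of it the same way, iterating the skyscraper construction on the shrinking untreated part of $\Omega$ (the residual to place is still $\eta$, hence still of bounded gap with the same constant), keeping the $\log\sin\angle$-coordinate inside $S$ and its jumps $\le C$ throughout, and using Alpern's tower lemma so that the heights fit exactly. Finally, to prescribe the actual Oseledets directions and not merely the angle along the columns, I would disintegrate $\eta$ over the angle coordinate and, within each angle level, distribute $\phi$ according to the conditional measure — legitimate because the construction can be arranged so that $\mu$ disintegrated over $\log\sin\angle\phi$ has non-atomic fibers. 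Granting $(\ast)$, the cocycle $F$ built in the first paragraph, with $c_1,c_2$ as chosen there, is bounded, has Lyapunov exponents $r_1>r_2$, and has Oseledets spaces with joint distribution $\eta$, which proves the theorem.
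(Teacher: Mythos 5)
Your overall architecture is the same as the paper's: reduce the theorem to producing a measurable $\phi\colon\Omega\to\mathsf{X}$ with $\phi_*\mu=\eta$ whose angle coordinate has uniformly bounded increments along $T$-orbits, then build the cocycle from the frame map together with diagonal multipliers, with the bounded-gap hypothesis entering as a chaining/connectivity condition on the support of the angle distribution. Two small remarks on your reduction: the ``sharper estimate'' in the case $c_1=c_2$ is only valid if the unit vectors $u_1(\omega),u_2(\omega)$ are chosen so that the angle between them is at most $\pi/2$ (otherwise the $\cos$-ratio singular value of the frame map can blow up even when the line-angle sines are comparable); the paper imposes exactly this normalization on its section $\rho$. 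Also, the necessity discussion is not needed for the statement being proved.

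The genuine gap is at your statement $(\ast)$: you never actually construct $\phi$ with \emph{exactly} the distribution $\eta$ and pointwise-bounded jumps, and this is the heart of the matter. Your plan --- tall Kakutani columns carrying $C$-Lipschitz paths that approximate $\eta$, followed by ``iterating the skyscraper construction on the shrinking untreated part'' --- is both vague and problematic: the untreated part is not $T$-invariant, so redefining $\phi$ there creates new interfaces along orbits where the bounded-jump condition has to be re-verified, and the ``residual measure still to be placed'' is not simply $\eta$ again; moreover, since $\eta$ may charge angles arbitrarily close to $0$, a column of height $h$ can only reach angle values down to roughly $e^{-Ch}$, so exact realization forces precise bookkeeping of how much mass is assigned to towers of each height. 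Your claim that this is ``exactly the technical point met in the proof of Theorem~\ref{t.flexible}'' and can be handled the same way is inaccurate: neither that proof nor the paper's proof of the present theorem uses any approximation-and-correction iteration. The paper isolates the needed statement as Theorem~\ref{t.boundedcost} and proves it exactly: by Lemma~\ref{l.March}, bounded gap allows one to write $\eta=\sum_n p_n\eta_n$ with compactly supported pieces such that consecutive pieces lie within the budget; the Alpern--Prasad theorem is then applied to a probability vector $(\pi_k)$ chosen by a telescoping computation so that the label function $\ell$ (which changes by at most $1$ per step along orbits) has level sets of measure exactly $p_n$; finally $f$ is defined on the $n$-th level set with law $\eta_n$ via Lemma~\ref{l.basic}. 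This decomposition-plus-label mechanism is precisely what your sketch is missing, and without it (or a worked-out substitute) the proof is incomplete.
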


\subsection{Organization of the paper}

Section~\ref{s.iid} contains the proofs of Theorems \ref{t.nonintegrable} and \ref{t.integrable}, while the proofs of Theorems~\ref{t.flexible} and \ref{t.boundedflexible} are given in the (entirely independent) Section~\ref{s.general}. The final section contains a brief discussion of directions for further research.


\section{The example and the criterion}\label{s.iid}

\subsection{Preparations}

We present some simple lemmas that will be used in the proofs of both Theorems \ref{t.nonintegrable} and \ref{t.integrable}.

\begin{lemma}[Triangular cocycles] \label{l.triangular}
Let $T$ be an ergodic automorphism of a probability space $(\Omega,\mathcal{S},\mu)$.
Let $F \colon \Omega \to \mathrm{GL}_2(\R)$ be a measurable function of the form
\begin{equation}
F(\omega) = \begin{pmatrix} a(\omega) & b(\omega) \\ 0 & 1 \end{pmatrix}
\end{equation}
such that $\log|a|$ and $\log^{\smallplus} |b|$ belong to $L^1(\mu)$, and $\int \log|a| \, d\mu < 0$.
Then the Lyapunov exponents of the cocycle $(T,F)$ are $\lambda_1 = 0$ and $\lambda_2 = \int \log|a| \, d\mu$, and the corresponding Oseledets spaces are, for $\mu$-a.e.~$\omega \in \Omega$,
\begin{equation}
\mathbf{E}_1(\omega) = \mathrm{span} \, \begin{pmatrix} X(\omega) \\ 1 \end{pmatrix} \quad \text{and} \quad
\mathbf{E}_2(\omega) = \mathrm{span} \, \begin{pmatrix} 1         \\ 0 \end{pmatrix} \, ,
\end{equation}
where
\begin{equation}\label{e.X_series}
X(\omega) \coloneqq \sum_{n=0}^\infty a(T^{-1} \omega) a(T^{-2} \omega) \cdots a(T^{-n} \omega) b(T^{-n-1} \omega) \, .
\end{equation}
\end{lemma}

The proof is straightforward. For closely related results, see \cite[Sec.~5]{kifer} and \cite[p.~161]{arnold}. 

\begin{lemma}[Weierstrass product inequalities]\label{l.Weierstrass}
For any convergent series $\sum a_n$  with terms $a_n \in [0,1]$, we have
\begin{equation}\label{e.quasi_de_Morgan}
\frac{\sum a_n}{1 + \sum a_n} \le 1 - \prod (1-a_n) \le \sum a_n \, .
\end{equation}
\end{lemma}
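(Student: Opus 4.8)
The plan is to prove the two inequalities in \eqref{e.quasi_de_Morgan} separately, each by a straightforward induction on the number of factors followed by a limiting argument. Write $P_N \coloneqq \prod_{n=1}^N (1-a_n)$ and $S_N \coloneqq \sum_{n=1}^N a_n$. Since each $a_n \in [0,1]$, the partial products $P_N$ form a non-increasing sequence in $[0,1]$, hence converge; and $S_N$ converges by hypothesis. So it suffices to establish
\begin{equation}\label{e.finite_version}
  \frac{S_N}{1+S_N} \le 1 - P_N \le S_N
\end{equation}
for every finite $N$, and then pass to the limit (the left-hand side is continuous and increasing in $S_N$, so no subtlety arises there).

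For the upper bound $1 - P_N \le S_N$, equivalently $P_N \ge 1 - S_N$, I would induct on $N$. The base case $N=1$ is an equality. For the inductive step, using $0 \le a_{N+1} \le 1$ and the inductive hypothesis $P_N \ge 1 - S_N$ together with $P_N \ge 0$,
\begin{equation}
  P_{N+1} = P_N(1 - a_{N+1}) \ge (1 - S_N)(1 - a_{N+1}) = 1 - S_N - a_{N+1} + S_N a_{N+1} \ge 1 - S_{N+1},
\end{equation}
where the last step drops the non-negative term $S_N a_{N+1}$. This is the Weierstrass product inequality in its classical form.

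For the lower bound $\frac{S_N}{1+S_N} \le 1 - P_N$, I would rewrite it as $P_N \le \frac{1}{1+S_N}$. Again induct on $N$; the case $N=1$ reads $1 - a_1 \le \frac{1}{1+a_1}$, which is just $1 - a_1^2 \le 1$. For the step, by the inductive hypothesis and $1 - a_{N+1} \ge 0$,
\begin{equation}
  P_{N+1} = P_N (1-a_{N+1}) \le \frac{1 - a_{N+1}}{1 + S_N},
\end{equation}
so it remains to check $\frac{1 - a_{N+1}}{1+S_N} \le \frac{1}{1 + S_N + a_{N+1}}$; cross-multiplying (both denominators are positive), this is $(1 - a_{N+1})(1 + S_N + a_{N+1}) \le 1 + S_N$, i.e.\ $1 + S_N + a_{N+1} - a_{N+1} - a_{N+1}S_N - a_{N+1}^2 \le 1 + S_N$, i.e.\ $-a_{N+1}S_N - a_{N+1}^2 \le 0$, which holds. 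Taking $N \to \infty$ in \eqref{e.finite_version} then yields \eqref{e.quasi_de_Morgan}.

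I do not anticipate any real obstacle here: both bounds are elementary and the only mild care needed is to confirm that all intermediate quantities ($P_N$, $1+S_N$, $1-a_{N+1}$) have the signs that make the manipulations valid, and that convergence of $\sum a_n$ justifies the passage to the limit. One could alternatively derive the upper bound from $1 - x \ge e^{-x/(1-x)}$-type estimates or the lower bound from convexity, but the direct induction is the cleanest and self-contained route.
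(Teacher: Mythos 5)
Your proof is correct. The paper does not actually prove Lemma~\ref{l.Weierstrass}; it simply cites Bromwich, and your argument---establishing the finite versions $1-P_N\le S_N$ and $P_N\le 1/(1+S_N)$ by induction on $N$ and then letting $N\to\infty$ (both sides converge, and $x\mapsto x/(1+x)$ is continuous)---is the standard, fully self-contained route, with all sign checks in the inductive steps valid as you state them.
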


See \cite[{\S}38]{Bromwich}. 
We remark that the lower bound in \eqref{e.quasi_de_Morgan} can be improved to $1 - e^{-\sum a_n}$, but the one above suffices for our purposes.

\begin{lemma}\label{l.double-edged}
Let $(A,\mathcal{T},\pi)$ be a probability space 
and let $\psi \colon A \to [0,\infty)$ be integrable.
On the product space $(\Omega,\mathcal{S},\mu) = (A,\mathcal{T},\pi)^{\otimes \Z}$, consider the function $Y \colon \Omega \to [0,\infty]$ defined by:
\begin{equation}\label{e.def_Y}
Y(\omega) \coloneqq \sup_{n \ge 0} \left[ \psi(\omega_{-n-1}) - n \right] \, , \quad \text{where } \omega = (\omega_n)_{n \in \Z} \, .
\end{equation}
Then $Y$ is $\mu$-integrable if and only if $\psi^2$ is $\pi$-integrable.
\end{lemma}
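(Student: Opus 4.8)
The plan is to compute $\E_\mu[Y]$ (where $Y \geq 0$) using the layer-cake formula $\E[Y] = \int_0^\infty \mu(Y > t)\,dt$, and to compare the resulting sum over integer thresholds with $\E_\pi[\psi^2]$. First I would observe that $Y(\omega) > t$ if and only if there exists $n \geq 0$ with $\psi(\omega_{-n-1}) > n + t$; since the coordinates $\omega_{-n-1}$ are i.i.d.\ with law $\pi$ as $n$ ranges over $\N_0$, the event $\{Y > t\}$ is a countable union of independent events, and
\begin{equation}
\mu(Y \le t) = \prod_{n=0}^\infty \pi\big(\psi \le n + t\big) = \prod_{n=0}^\infty \big(1 - \pi(\psi > n+t)\big).
\end{equation}
Writing $a_n(t) \coloneqq \pi(\psi > n + t) \in [0,1]$, the series $\sum_n a_n(t)$ converges because $\psi$ is integrable (indeed $\sum_{n \ge 0} \pi(\psi > n) \le \E_\pi[\psi] + 1 < \infty$, and shifting by $t$ only helps). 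Lemma~\ref{l.Weierstrass} then gives the two-sided estimate
\begin{equation}
\frac{S(t)}{1 + S(t)} \le \mu(Y > t) \le S(t), \qquad \text{where } S(t) \coloneqq \sum_{n=0}^\infty \pi(\psi > n + t).
\end{equation}

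Next I would integrate in $t$ over $[0,\infty)$. The upper bound yields $\E_\mu[Y] \le \int_0^\infty S(t)\,dt = \sum_{n=0}^\infty \int_0^\infty \pi(\psi > n+t)\,dt = \sum_{n=0}^\infty \int_n^\infty \pi(\psi > s)\,ds = \int_0^\infty \lceil s \rceil \,\mathbf{1}_{s>0}\,\pi(\psi > s)\,ds$ up to the obvious bookkeeping; more cleanly, $\int_0^\infty S(t)\,dt = \sum_{n \ge 0}(n+1)\,\pi(n < \psi \le n+1)\cdot(\text{something})$ — the honest computation is $\int_0^\infty S(t)\,dt = \int_0^\infty \#\{n \ge 0 : n < s\}\,\pi(\psi > s)\,ds$... let me instead just record that by Tonelli,
\begin{equation}
\int_0^\infty S(t)\,dt = \int_A \Big(\textstyle\sum_{n \ge 0}\max(\psi(a)-n,0)\Big)\,d\pi(a) = \int_A \binom{\lceil \psi(a)\rceil}{2} + O(\psi(a))\,d\pi(a),
\end{equation}
which is finite if and only if $\psi^2 \in L^1(\pi)$. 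For the converse I would use the lower bound: on the region $\{t : S(t) \le 1\}$ one has $\mu(Y>t) \ge \tfrac12 S(t)$, and outside a bounded interval $S(t) \le 1$ automatically (since $S(t) \to 0$ as $t \to \infty$ by dominated convergence), so $\E_\mu[Y] \ge \tfrac12 \int_{t_0}^\infty S(t)\,dt - C$ for suitable $t_0, C$, and the tail integral $\int_{t_0}^\infty S(t)\,dt$ differs from $\int_0^\infty S(t)\,dt$ by a finite amount; hence $\E_\mu[Y] < \infty$ forces $\int_0^\infty S(t)\,dt < \infty$, i.e.\ $\psi^2 \in L^1(\pi)$.

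The main obstacle — really the only subtle point — is the bookkeeping that identifies $\int_0^\infty S(t)\,dt$ with (a quantity comparable to) $\E_\pi[\psi^2]$; the cleanest route is to swap sum and integral via Tonelli and note $\sum_{n\ge 0}\max(x-n,0) = \tfrac12 \lfloor x \rfloor(\lfloor x \rfloor + 1) + (x - \lfloor x\rfloor)\lfloor x\rfloor$, which is squeezed between $\tfrac12(x-1)^2$ and $\tfrac12(x+1)^2$ for $x \ge 1$ and is bounded for $x \in [0,1]$, so $\int_0^\infty S(t)\,dt < \infty \iff \E_\pi[\psi^2] < \infty$. One should also check convergence of the series $S(t)$ to invoke Lemma~\ref{l.Weierstrass}; this is exactly the integrability of $\psi$, which is a standing hypothesis. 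Everything else is routine: the independence of coordinates gives the product formula, and Lemma~\ref{l.Weierstrass} converts the product into a comparison with $S(t)$ with no loss of the equivalence.
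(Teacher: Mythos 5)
Your proof is correct and follows essentially the same route as the paper: layer-cake formula, independence of the coordinates giving a product formula for $\mu(Y\le t)$, the Weierstrass inequalities of Lemma~\ref{l.Weierstrass}, and a Tonelli computation identifying the resulting quantity with something comparable to $\int \psi^2\,d\pi$ (the paper merely discretizes to integer thresholds $b_k=\mu[Y\ge k]$ and sums $\sum_j j\,a_j$ instead of integrating $S(t)$ in $t$, and uses the uniform bound $\sum_j a_j \le \int\psi\,d\pi$ in place of your restriction to $\{S(t)\le 1\}$). The only blemish is the closed form $\sum_{n\ge0}\max(x-n,0)=\tfrac12\lfloor x\rfloor(\lfloor x\rfloor+1)+(x-\lfloor x\rfloor)\lfloor x\rfloor$, where the last factor should be $\lfloor x\rfloor+1$; this does not affect your argument, since the squeeze between $\tfrac12(x-1)^2$ and $\tfrac12(x+1)^2$ that you actually use is correct.
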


\begin{proof}
Note that $Y \ge 0$. 
By the layer cake formula,
\begin{equation}
\int Y \, d\mu = \int_0^\infty \mu [Y \ge t] \, dt \, ,
\end{equation}
where we are using the probabilist's notation $[Y \ge t] \coloneqq \{\omega \in \Omega \st Y(\omega) \ge t \}$.
By the Maclaurin--Cauchy test, integrability of $Y$ is equivalent to convergence of the series $\sum b_k$, where $b_k \coloneqq \mu[Y \ge k]$.
Let also 
$a_k \coloneqq \pi[\psi \ge k]$ and $C \coloneqq \int \psi \, d\pi$.
By assumption, $C<\infty$.
Another application of the layer cake formula yields $\sum_{k=1}^\infty a_k \le C$.

Note that $[Y < k] = \bigcap_{n=0}^\infty [\psi(\omega_{-n-1})<n+k]$, an intersection of independent events whose probabilities are
\begin{equation}
\mu[\psi(\omega_{-n-1})<n+k] = \mu[\psi(\omega_0)<n+k] = 1 - a_{n+k} \, . 
\end{equation}
Therefore,
\begin{equation}
b_k = 1 - \mu[Y<k] = 1 - \prod_{n=0}^\infty (1 - a_{n+k}) = 1 - \prod_{j=k}^\infty (1-a_j) \, .
\end{equation}
So Lemma~\ref{l.Weierstrass} gives
\begin{equation}
\frac{1}{1+C} \sum_{j=k}^\infty a_j \le b_k \le \sum_{j=k}^\infty a_j \, .
\end{equation}
In particular, convergence of the series $\sum b_k$ is equivalent to the convergence of the double series $s \coloneqq \sum_{k=1}^\infty\sum_{j=k}^\infty a_j$.
The latter can be rewritten as:
\begin{equation}
s = \sum_{j=1}^\infty j a_j 
= \sum_{j=1}^\infty (1+\cdots+j) (a_j - a_{j+1})  \\ 
= \sum_{j=1}^\infty \frac{j(j+1)}{2} \pi[j \le \psi < j+1]  \, ,
\end{equation}
or equivalently,
\begin{equation}
s = \int q \big( \lfloor\psi(\alpha) \rfloor \big) \, d\pi(\alpha) \, , \quad \text{where} \quad 
q(x) \coloneqq \frac{x(x+1)}{2} \, .
\end{equation}
Recalling the underlying assumption $\int \psi \, d\pi < \infty$, it becomes clear that 
\begin{equation}
s < \infty  \quad \Leftrightarrow \quad 
\int q \circ \psi \, d\pi < \infty  \quad \Leftrightarrow \quad 
\int \psi^2 \, d\pi < \infty  \, .
\end{equation}
We have already seen that $\int Y \, d\mu < \infty$ $\Leftrightarrow$ $s<\infty$, and so the proof is complete. 
\end{proof}

\subsection{Proofs of the theorems} \label{ss.iid_proofs}

Now we put ourselves in the setting of one-step cocycles $(\sigma,F)$, as explained in Subsection~\ref{ss.onestep}. 
So $\sigma$ is the Bernoulli shift on the space $(\Omega,\mathcal{S},\mu) = (A,\mathcal{T},\pi)^{\otimes \Z}$, and $F \colon \Omega \to \mathrm{GL}_2(\R)$ is such that $F(\omega)$ only depends on the zero-th coordinate~$\omega_0$.

\begin{proof}[Proof of Theorem~\ref{t.nonintegrable}]
Let $(\sigma,F)$ be any one-step cocycle where $F$ is of the form
\begin{equation}
F(\omega) = \begin{pmatrix} e^{-1} & e^{\psi(\omega_0)} \\ 0 & 1 \end{pmatrix}
\end{equation}
for a non-negative function $\psi$ such that
\begin{equation}
\int \psi \, d\pi < \infty \quad \text{and} \quad
\int \psi^2 \, d\pi = \infty \, .
\end{equation}
By Lemma~\ref{l.triangular}, the cocycle has Lyapunov exponents $\lambda_1 = 0$, $\lambda_2 = -1$, and the corresponding Oseledets directions are spanned by 
the vectors $(X(\omega),1)^\top$ and $(1,0)^\top$,
where
\begin{equation}
X(\omega) = \sum_{n=0}^\infty e^{\psi(\omega_{-n-1}) - n} \quad \text{for $\mu$-a.e.\ $\omega = (\omega_n)_{n \in \Z}$.}
\end{equation}
Then $X(\omega) \ge e^{Y(\omega)}$, where $Y$ is defined as in \eqref{e.def_Y}.
A direct application of Lemma~\ref{l.double-edged} shows that $Y$ is non-integrable.
Thus, $\log X$ is also non-integrable.
In terms of the Oseledets angle $\theta \coloneqq \angle(\mathbf{E}_1, \mathbf{E}_2)$, we have $X = \cot \theta$.
It follows that $\log \theta$ is non-integrable. 
\end{proof}

\begin{proof}[Proof of Theorem~\ref{t.integrable}]
Let $\nu$ be a probability measure on $\mathrm{GL}(2,\R)$ with finite second moment and such that $\lambda_1(\nu) > \lambda_2(\nu)$.
Let $\Gamma_\nu$ be semigroup generated by the support of $\nu$.
As a first case, assume that $\Gamma_\nu$ is \emph{strongly irreducible}, that is, its action on projective space admits no finite invariant sets other than the empty set.
The hypothesis that \(\lambda_1(\nu) > \lambda_2(\nu)\) implies that \(\Gamma_\nu\) contains a proximal element (that is, an element with two real eigenvalues of distinct moduli): see \cite[Lemma~4.1]{BQbook}.
This allows us to apply  the key result of \cite[Proposition 4.5]{benoist-quint}, 
which tells us that the function
\begin{equation}
\xi \in \R \mathrm{P}^1 \mapsto \int \left| \log \sin \angle (\mathbf{E}_1(\omega) , \xi) \right| \, d\mu(\omega)
\end{equation}
is continuous.
Since the projective space $\R \mathrm{P}^1$ is compact, the function is bounded.

Given $\omega = (\omega)_{n \in \Z} \in \Omega$, write $\omega = (\omega_{\smallminus},\omega_{\smallplus})$, where $\omega_{\smallminus} = (\omega)_{n<0}$ and $\omega_{\smallplus} = (\omega)_{n \ge 0}$.
For one-step cocycles, the first Oseledets direction $\mathbf{E}_1(\omega)$ depends only on $\omega_{\smallminus}$, while the second direction $\mathbf{E}_2(\omega)$ depends only on $\omega_{\smallplus}$. So we can denote them as  $\mathbf{E}_1(\omega_{\smallminus})$ and $\mathbf{E}_2(\omega_{\smallplus})$. Let $\mu_{\smallpm}$ denote the push-forward of $\mu$ under the projection $\omega \mapsto \omega_{\smallpm}$. Then $\mu$ coincides with the product measure $\mu_\smallminus \otimes \mu_\smallplus$. 
Therefore, by Fubini--Tonelli theorem,
$\int \left| \log \sin \angle (\mathbf{E}_1, \mathbf{E}_2) \right| \, d\mu$ equals the double integral
\begin{equation}
\int \int \left| \log \sin \angle (\mathbf{E}_1(\omega_{\smallminus}) , \mathbf{E}_2(\omega_{\smallplus})) \right| \, d\mu_{\smallminus}(\omega_{\smallminus}) \, d\mu_{\smallplus}(\omega_{\smallplus}) 
\end{equation}
which is finite since the inner integral is bounded. 
This concludes the proof of Theorem~\ref{t.integrable} in the strongly irreducible case.

Next, we must deal with the cases where \(\Gamma_\nu\) permutes a finite set \(S\) of one-dimensional subspaces of \(\R^2\).

If \(\varhash S \ge 3\), then a cross-ratio argument shows that the projection of \(\Gamma_\nu\) to \(\mathrm{PGL}_2(\R)\) is finite.  This implies that \(\Gamma_\nu\) consists of linear conformal mappings with respect to some inner product on \(\R^2\), which is incompatible with the hypothesis \(\lambda_1(\nu) > \lambda_2(\nu)\).

If \(\varhash S = 2\), then the hypothesis \(\lambda_1(\nu) > \lambda_2(\nu)\) implies that \(\lbrace \mathbf{E}_1(\omega), \mathbf{E}_2(\omega)\rbrace = S\) \(\mu\)-a.e., and in particular the angle between Oseledets subspaces is log-integrable.

It remains to consider the case \(\varhash S = 1\).   Taking inverses we can assume that the stable subspace \(\mathbf{E}_2(\omega)\) belongs to $S$ for \(\mu\)-a.e.~\(\omega \in \Omega\).   With a change of basis we further assume that all \(\nu\)-almost every matrix is upper triangular.  Multiplying the cocycle matrices by a log-integrable factor, we can assume that the lower right entry is always $1$. That is, the function $F$ is of the form:
\begin{equation}
F(\omega) = \begin{pmatrix} a(\omega_0) & b(\omega_0) \\ 0 & 1 \end{pmatrix}  \, ,
\end{equation}
where $\log|a|$ and $\log|b|$ are square-integrable and $\int \log |a| \, d\pi < 0$.
By Lemma~\ref{l.triangular}, the cocycle has Lyapunov exponents $\lambda_1 = 0$ and $\lambda_2 = \int \log |a| \, d\pi$, and the corresponding Oseledets directions are spanned by 
the vectors $(X(\omega),1)^\top$ and $(1,0)^\top$,
where
\begin{equation}
X(\omega) 
= \sum_{n=0}^\infty a(\omega_{-1}) a(\omega_{-2}) \cdots a(\omega_{-n}) b(\omega_{-n-1}) \, .
\end{equation}
Writing 
$\phi(\omega_0) \coloneqq - \log |a(\omega_0)|$, 
$\psi(\omega_0) \coloneqq \log |b(\omega_0)|$, we have:
\begin{equation}
X(\omega) = \sum_{n=0}^\infty \pm e^{S_n(\omega)} \quad \text{where} \quad
S_n(\omega) \coloneqq \psi(\omega_{-n-1}) - \sum_{i=1}^n \phi(\omega_{-i}) \, .
\end{equation}
We can bound the last expression as:
\begin{equation}
S_n(\omega) \le  
\underbrace{\Big[ \psi^{\smallplus}(\omega_{-n-1}) -cn \Big]}_{\eqcolon Y_n(\omega)} +
\underbrace{\Big[ 2cn - {\textstyle\sum_{i=1}^n \phi(\omega_{-i}) } \Big]}_{\eqcolon Z_n(\omega)}  
-cn \, ,
\end{equation}
where $c \coloneqq \frac{1}{3} \int \phi \, d\pi > 0$.
Consider the following quantities:
\begin{equation}
Y(\omega) \coloneqq \sup_{n \ge 0} Y_n(\omega) \quad \text{and} \quad
Z(\omega) \coloneqq \sup_{n \ge 0} Z_n(\omega) \, .
\end{equation}
An application of Lemma~\ref{l.double-edged} to the square-integrable function $\psi^{\smallplus}/c$ shows that $Y$ is integrable. 
On the other hand, the sequence $(Z_n(\omega))$ forms a random walk with negative drift and square-integrable (i.i.d.) increments. Therefore, by a theorem of Kiefer and Wolfowitz \cite[Theorem~5]{KieferWolfowitz} (or \cite[Corollary~3, p.~397]{Chow}), the supremum $Z(\omega)$ is integrable.

We have $S_n(\omega) \le Y(\omega) + Z(\omega) -cn$ and thus
\begin{equation}
\log |X (\omega)| \le \log \sum_{n=0}^\infty e^{S_n(\omega)} \le  Y(\omega) + Z(\omega) + \text{constant} \, .
\end{equation}
It follows that $\log^{\smallplus} |X| = \log^{\smallplus} \cot \angle(\mathbf{E}_1, \mathbf{E}_2)$ is integrable, as we wanted to show. 
\end{proof}

\begin{remark}
It is not possible to find $\nu$ in the context of Theorem~\ref{t.nonintegrable} such that $\Gamma_\nu$ is strongly irreducible. This follows from a recent result of P\'eneau \cite[Corollary~1.14]{Peneau}. We thank Cagri Sert for this observation. 
\end{remark}

\begin{remark}
Under other types of moment conditions on $a$ and $b$, information on distribution of the corresponding random variable $X$ defined by \eqref{e.X_series} was obtained by Kesten and Goldie: see \cite{BDZ} and references therein. We thank Cagri Sert for telling us about this. 
\end{remark}


\section{Flexibility of Oseledets data}\label{s.general}

In this section we prove Theorems~\ref{t.flexible} and \ref{t.boundedflexible}.  
We start we a few preparations.

\subsection{Construction of functions with prescribed distributions and small average costs} 

Recall that a \emph{Polish space} is a separable completely metrizable topological space.


\begin{lemma}[Function with prescribed distribution]\label{l.basic}
If \((\Omega,\mathcal{S},\mu)\) is a non-atomic Lebesgue probability space
and \(\eta\) a Borel probability measure on a Polish space \(X\),
then there exists a measurable map \(f:\Omega \to X\) with distribution \(\eta\), that is, \(f_*\mu= \eta\).
\end{lemma}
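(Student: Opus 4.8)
The plan is to reduce the general Polish-space statement to the classical fact that a non-atomic Lebesgue space carries a random variable with uniform distribution on $[0,1]$, and then to transport that uniform variable to the target via a Borel isomorphism of measurable spaces. Recall that a Lebesgue probability space is, by definition, measurably isomorphic (mod zero) to a disjoint union of an interval $[0,s]$ with Lebesgue measure and countably many atoms; since we assume $(\Omega,\mathcal{S},\mu)$ is non-atomic, it is isomorphic mod zero to $([0,1],\mathrm{Borel},\mathrm{Leb})$. So first I would fix such an isomorphism and thereby produce a measurable $U \colon \Omega \to [0,1]$ with $U_*\mu = \mathrm{Leb}$.

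Next I would invoke the Borel isomorphism theorem: any two uncountable Polish spaces are Borel isomorphic. If $X$ is uncountable, choose a Borel isomorphism $\beta \colon [0,1] \to X$. Then $\beta_* \mathrm{Leb}$ is some Borel probability measure on $X$, but that is not yet $\eta$; to fix the distribution I would instead use a quantile/inverse-CDF style transport. Concretely, by the isomorphism theorem applied to the probability space $(X,\eta)$ rather than to $X$ alone, there is a Borel map $Q \colon [0,1] \to X$ with $Q_*\mathrm{Leb} = \eta$ — this is the standard construction of a random variable with prescribed distribution on a Polish space (e.g.\ via a Borel isomorphism of $(X,\eta)$ with an interval-plus-atoms model, or via successive conditioning on a countable generating algebra). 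Setting $f \coloneqq Q \circ U$ gives a measurable map $\Omega \to X$ with $f_*\mu = Q_*(U_*\mu) = Q_*\mathrm{Leb} = \eta$, as desired. When $X$ is countable (or $\eta$ is supported on a countable set) the construction degenerates to partitioning $[0,1]$ into intervals of the prescribed masses, which is elementary; one should mention this case for completeness.

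The only genuine content is the existence of the Borel map $Q$ with $Q_* \mathrm{Leb} = \eta$, i.e.\ a random variable with an arbitrary prescribed Polish-space distribution defined on the standard interval. This is classical measure theory — it is exactly the statement cited to \cite{delarue} — so I would simply quote it, perhaps sketching the proof: reduce via a Borel isomorphism to the case $X$ a closed subset of $[0,1]^\N$ (or of $\{0,1\}^\N$), build the distribution coordinate by coordinate using regular conditional distributions, which exist because the spaces are standard Borel, and assemble the resulting coordinatewise maps into $Q$. I do not expect any real obstacle here; the proof is short and the main care is bookkeeping about the ``mod zero'' identifications, which do not affect $f_*\mu$. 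The proof is essentially a one-paragraph citation with an optional self-contained sketch along the lines just described.
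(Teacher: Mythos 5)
Your argument is correct and is essentially the standard one: the paper itself gives no proof of this lemma, merely citing \cite{delarue}, and your reduction (non-atomic Lebesgue space $\cong$ $([0,1],\mathrm{Leb})$ mod zero, then a Borel map $Q$ with $Q_*\mathrm{Leb}=\eta$ via the Borel isomorphism theorem / inverse-CDF transport, composed to give $f=Q\circ U$) is exactly the classical construction that citation stands for. Your handling of the countable case and of the mod-zero identifications is also fine, since altering $f$ on a null set does not affect $f_*\mu$.
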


\begin{proof}
We can assume that $\Omega$ is the unit interval, $\mathcal{S}$ is the Lebesgue $\sigma$-algebra, and $\mu$ is Lebesgue measure. If $X = \R$, the statement is a standard construction: see \cite[p.~429, Proposition~2, part~(2)]{Embrechts}. The case of a general Polish space follows from the Borel isomorphism theorem \cite[p.~99]{Srivastava}.
\end{proof}

\medskip

A \emph{cost function} on a Polish space $X$ is a nonnegative upper semicontinuous function $c \colon X \times X \to \R$ that vanishes on the diagonal. 

\begin{theorem}\label{t.lowcost}
Let $T$ be an ergodic automorphism of a non-atomic Lebesgue probability space $(\Omega,\mathcal{S},\mu)$.
Let $X$ be a Polish space, and let $\eta$ be a Borel probability measure on $X$. 
Let $c \colon X \times X \to [0,\infty)$ be a cost function. 
Then, for any $\epsilon>0$, there exists a measurable function $f \colon \Omega \to X$ such that 
$f_* \mu = \eta$ and 
\begin{equation}\label{e.lowcost}
\int_\Omega c(f(\omega),f(T \omega)) \, d\mu(\omega) < \epsilon \, .
\end{equation}
\end{theorem}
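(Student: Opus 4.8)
The plan is to build $f$ as a perturbation of an arbitrary function $g$ with $g_*\mu = \eta$ (which exists by Lemma~\ref{l.basic}), where the perturbation rearranges $g$ along $T$-orbits so that consecutive values $f(\omega)$, $f(T\omega)$ coincide most of the time, forcing the cost integral to be small because the cost vanishes on the diagonal. The natural device is a Rokhlin tower: by the Rokhlin lemma, for any $N$ there is a measurable set $E \subseteq \Omega$ with $E, TE, \dots, T^{N-1}E$ disjoint and $\mu\big(\bigcup_{j=0}^{N-1} T^j E\big) > 1 - \tfrac1N$. On each rung of the tower we want $f$ to be constant in $\omega$ except when crossing from the top rung back down, so that $c(f(\omega),f(T\omega)) = 0$ at all but a $1/N$-fraction of points; but we must do this while keeping the global distribution exactly $\eta$.

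The key step is therefore a \emph{distribution-preserving} construction on the tower. First I would reduce the cost: since $c$ is upper semicontinuous and vanishes on the diagonal, and $X$ is Polish, for the given $\epsilon$ one can choose a finite Borel partition $X = \bigsqcup_{i=1}^{m} P_i$ into pieces of small $\eta$-measure and small ``diameter in cost'', i.e.\ with $\sup_{x,y \in P_i} c(x,y)$ small for the pieces carrying most of the mass and, handling the tail where $c$ may be large, arranging that the total $\eta$-mass of the bad pieces times a truncation is controlled — more simply, approximate $\eta$ by a finitely supported measure and absorb the error using that $c$ is finite-valued. Then build $f$ so that it is \emph{constant on each maximal run up the tower}, with the value on that run chosen from the atoms of the discretized target; by distributing the choices of these constants across the (measure-$\approx 1$) base of the tower with the correct frequencies, and using non-atomicity of $\mu$ restricted to $E$ together with Lemma~\ref{l.basic} applied to $(E, \mu|_E/\mu(E))$, one gets $f_*\mu$ equal to the discretized target exactly, hence within the prescribed total-variation distance of $\eta$; a final matching/coupling argument upgrades ``close to $\eta$'' to ``exactly $\eta$'' by a further small rearrangement that changes $f$ only on a set of small measure, contributing only a small amount to the cost integral (cost values being bounded on the relevant finite support).

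The main obstacle I anticipate is precisely this simultaneous control of two constraints — \emph{exactly} $f_*\mu = \eta$ and \emph{small} cost — because the tower construction naturally gives only an approximate distribution. The cleanest route is probably an iterative/limiting scheme: produce a sequence $f_k$ with $(f_k)_*\mu \to \eta$ weakly (indeed in total variation) and $\int c(f_k(\omega),f_k(T\omega))\,d\mu < \epsilon_k \to 0$, where at stage $k+1$ one only modifies $f_k$ on a set of measure $< 2^{-k}$, so that $f_k$ converges a.e.\ to a limit $f$; the modifications are chosen (via couplings between $(f_k)_*\mu$ and $\eta$, realized using non-atomicity) to drive the distribution to $\eta$ in the limit while the geometric control on the sizes of the modified sets keeps both the a.e.\ convergence and, by upper semicontinuity of $c$ plus dominated/truncation arguments, the bound $\int c(f(\omega),f(T\omega))\,d\mu \le \epsilon$ in the limit. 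Verifying that the limit function still has distribution exactly $\eta$ (not merely approximately) is where the summability $\sum 2^{-k} < \infty$ and a Borel--Cantelli argument do the work: a.e.\ $\omega$ is eventually untouched, so $f = f_k$ eventually pointwise, and one arranges the construction so that the \emph{target} distribution at stage $k$ is already $\eta$ on the ``frozen'' part.
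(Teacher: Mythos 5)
Your skeleton (tower, $f$ constant along each run up the tower, cost incurred only at the crossings, then fix the distribution) is the same as the paper's, but the step where your plan actually breaks is the upgrade from ``approximately $\eta$'' to ``exactly $\eta$''. The cost function is only assumed upper semicontinuous, nonnegative, and zero on the diagonal; it is \emph{not} bounded (and in the application to Theorem~\ref{t.flexible} it is genuinely unbounded, since it comes from a locally bounded $N$ on the non-compact group $\GL_2(\R)$). Your correction scheme modifies $f_k$ on sets of measure $2^{-k}$ and argues that the cost contribution is small because ``cost values are bounded on the relevant finite support''; but the corrections are exactly the step that must spread mass over all of $\supp\eta$, which is in general non-compact, so the pairs $(f(\omega),f(T\omega))$ created at the boundary of a modified set can have arbitrarily large cost, and a factor $2^{-k}$ does not beat an unbounded integrand. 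To make this work you must choose the measure of the set on which a given ``expensive'' value is introduced \emph{after} seeing a bound for $c$ on the compact piece carrying that value --- i.e.\ the smallness of the modification has to be adapted to the cost, not generic.

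That adaptation is precisely the paper's mechanism, and it also removes the need for any limiting/correction argument: write $\eta=\sum_n p_n\eta_n$ with each $\eta_n$ compactly supported (tightness), set $K_n\coloneqq\supp\eta_1\cup\dots\cup\supp\eta_n$ and $C_n\coloneqq\max_{K_n\times K_n}c$ (finite by upper semicontinuity on a compact), and choose heights $k_n$ with $C_n/k_n<\epsilon/2$. Then, instead of the plain Rokhlin lemma with a single height $N$, invoke the Alpern--Prasad theorem to produce a skyscraper whose tower of height $k_n$ has measure \emph{exactly} $p_n$; putting the values of $\eta_n$ (via Lemma~\ref{l.basic}) on the base of that tower and propagating them up the columns gives $f_*\mu=\eta$ exactly, with no discretization error to repair, and each crossing cost $C_{\max(m,n)}\le C_m+C_n$ is amortized against the measure $p_n/k_n$ of the corresponding tower top, giving total cost $<\epsilon$. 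The nesting $K_n\subseteq K_{n+1}$ is what controls the cost of crossings \emph{between} different pieces, a point your plan does not address either (small ``cost-diameter'' of individual pieces says nothing about pairs coming from different runs). So the missing ideas are concrete: (i) prescribed, piece-dependent tower heights (Alpern--Prasad rather than Rokhlin), which simultaneously yield the exact distribution and the amortization of unbounded costs, and (ii) a nested exhaustion by compacts to bound cross-piece crossing costs.
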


The conclusion of the theorem above implies that, for $\mu$-almost every $\omega$, the sequence of points $x_n \coloneqq f(T^n \omega)$ travels around the space $X$ achieving the prescribed distribution $\eta$ in such a way that $c(x_n,x_{n+1})$ (the cost of travel per unit of time) is small on average. As the proof will show, low cost travel can be achieved by moving slowly. 

\medskip

The proof of Theorem~\ref{t.lowcost} is a skyscraper construction. 
Let us recall the relevant facts.

Suppose $T$ is an ergodic automorphism of a probability space $(\Omega,\mathcal{S},\mu)$, and that $B \in \mathcal{S}$ has $\mu(B)>0$. By the Poincar\'e recurrence theorem, for $\mu$-almost every $\omega \in B$, there exists $k \ge 1$ such that $T^k \omega \in B$. Let $B_k$ denote the set of points of $B$ whose first return to $B$ occurs at time $k$, that is,
\begin{equation}\label{e.Bk}
B_k \coloneqq B \cap T^{-k}(B) \setminus \bigcup_{i=1}^{k-1} T^{-i}(B) \, .
\end{equation}
For each $k$, the sets $B_k, TB_k, \dots, T^{k-1} B_k$ are disjoint, and their union forms a \emph{tower} of height $k$ and measure $k \mu(B_k)$.
These towers form a disjoint collection, called the \emph{skyscraper} with base $B$.
By ergodicity, the skyscraper covers the whole space $\Omega$, except for a zero measure set. 
In particular, $\sum_{k=1}^\infty k \mu(B_k) = 1$; this statement is known as \emph{Kac's lemma}.

Assuming that $(\Omega,\mathcal{S},\mu)$ is a non-atomic Lebesgue space, a partial converse to the Kac's lemma is provided by the following theorem of Alpern and Prasad \cite{AP_paper} (or \cite[Theorem~A.1.4]{AP_book}): 
given any sequence $\pi = (\pi_1,\pi_2,\dots)$ of nonnegative numbers such that
\begin{equation} 
\sum_{k=1}^\infty \pi_k = 1 \quad \text{and} \quad \mathrm{gcd}\{k \st \pi_k \neq 0\} = 1 \, , 
\end{equation}
we can find a positive measure subset $B \subseteq \Omega$ such that the towers of the skyscraper with base $B$ have measures as specified by the sequence $\pi$. 
Equivalently, the sets \eqref{e.Bk} satisfy 
\begin{equation}\label{e.AP}
\mu(B_k) = \frac{\pi_k}{k} \quad \text{for every $k \ge 1$.}
\end{equation}
Note that the Alpern--Prasad theorem is an extension of the Rokhlin tower lemma. 

\begin{proof}[Proof of Theorem~\ref{t.lowcost}]
Since \(X\) is Polish, by \cite[p.~8, Theorem~1.3]{billingsley} the measure \(\eta\) is tight, meaning there is a countable union of compact sets which has full measure.  This implies we can write 
$\eta$ as a convex combination $\sum_{n=1}^\infty p_n \eta_n$ of compactly supported probability measures $\eta_n$. 
Let $K_n$ be the union of the supports of $\eta_1,\dots,\eta_n$.

We define 
\begin{equation}
C_n \coloneqq \max_{x,x' \in K_n}c(x,x') \, ,
\end{equation} 
which exists because \(c\) is upper semicontinuous and \(K_n\) is compact.  This is a non-decreasing sequence, because \(K_n \subseteq K_{n+1}\) for all \(n\).

Given \(\epsilon > 0\), we choose a sequence of positive numbers \(k_1 < k_2 < \cdots\) whose GCD is $1$ and is such that 
\begin{equation}\label{e.stoppedtimes}
\frac{C_n}{k_n} < \frac{\epsilon}{2}  \quad \text{for all \(n\).}
\end{equation}
Let \(\pi = (\pi_1,\pi_2,\ldots)\) be the probability vector defined as $\pi_{k_n} \coloneqq p_n$ and \(\pi_k \coloneqq 0\) if \(k \notin \lbrace k_1,k_2,\ldots\rbrace\).
Now we let \(B\) be the set given for \(\pi\) by the Alpern--Prasad theorem: the associated subsets $B_k$ defined in \eqref{e.Bk} have measures specified by \eqref{e.AP}.

We now let \(f:\Omega \to X\) be such that \((f|_{B_{k_n}})_{*}( \mu|_{B_{k_n}}) = p_n\eta_n/k_n\) for all \(n\) (using Lemma~\ref{l.basic}) and \(f(\omega) = f(T^{-1}\omega)\) if \(\omega \notin B\).
It is easy to check that \(f_*\mu = \eta\).

We are left to bound the integral in \eqref{e.lowcost}. 
We claim that the integrand can be bounded as follows:
\begin{gather}\label{e.cost_estimate}
c(f(\omega) , f(T\omega)) \le  g(\omega) + h(\omega) \, ,  \quad \text{where}
\\
g \coloneqq \sum_{n=1}^\infty C_n \mathbf{1}_{T^{-1}(B_{k_n})} \quad \text{and} \quad 
h \coloneqq \sum_{n=1}^\infty C_n \mathbf{1}_{T^{k_n-1}(B_{k_n})} \, .
\end{gather}
Indeed, if $\omega \not\in T^{-1}(B)$, then all terms in \eqref{e.cost_estimate} vanish.
On the other hand, for $\mu$-a.e.\ $\omega \in T^{-1}(B)$, there exist $n$ and $m$ such that $\omega \in T^{-1}(B_{k_n}) \cap T^{k_m-1}(B_{k_m})$. Therefore, $f(T\omega) \in K_n$ and $f(\omega)\in K_m$. It follows that $c(f(\omega) , f(T\omega)) \le C_{\max(m,n)} \le C_m + C_n$, proving \eqref{e.cost_estimate}.
On the other hand,
\begin{equation}
\int g \, d\mu = \int h \, d\mu
= \sum_{n=1}^\infty C_n \mu(B_{k_n})
= \sum_{n=1}^\infty \frac{C_n p_n}{k_n}
< \sum_{n=1}^\infty \frac{\epsilon p_n}{2} 
= \frac{\epsilon}{2} \, ,
\end{equation}
so we obtain $\int c(f(\omega) , f(T\omega)) \, d\mu(\omega)<\epsilon$.
\end{proof}

\subsection{Flexibility for general cocycles}

We now use Theorem~\ref{t.lowcost} to prove our first theorem on flexibility of the Oseledets data.

\begin{proof}[Proof of Theorem~\ref{t.flexible}]
We consider
\begin{equation}
  \widetilde{\mathsf{X}} \coloneqq \left\lbrace (u_1,u_2) \in \R^2\times \R^2: \|u_1\| = \|u_2\| = 1, \  u_1 \neq \pm u_2\right\rbrace,
\end{equation}
together with the natural four-to-one projection \(\pi: \widetilde{\mathsf{X}} \to \mathsf{X}\).
Given $\tilde{x} = (u_1,u_2),\tilde{y} = (v_1,v_2) \in \widetilde{\mathsf{X}}$, let $\widetilde{\Phi}(\tilde{x},\tilde{y})$ be the unique element of $\mathrm{GL}_2(\R)$ that maps $u_1$ to $v_1$ and $u_2$ to $v_2$. 
We choose a measurable map \(\rho: \mathsf{X} \to \widetilde{\mathsf{X}}\) such that \(\pi\circ \rho\) is the identity on \(\mathsf{X}\), and define \(\Phi:\mathsf{X} \times \mathsf{X} \to \GL_2(\R)\) by \(\Phi\coloneqq \widetilde{\Phi} \circ (\rho\times \rho)\), that is, \(\Phi(x,y) \coloneqq \widetilde{\Phi}(\rho(x),\rho(y))\).

Next, we choose two functions $\psi_1, \psi_2 \colon \mathsf{X} \to \R$ which are continuous, have bounded support, and have averages $\int \psi_j \, d\eta = r_j$. For each $x = (x_1,x_2) \in \mathsf{X}$, let $\Psi(x)$ be the matrix with eigenspaces $x_1$, $x_2$ and corresponding eigenvalues $e^{\psi_1(x)}$, $e^{\psi_2(x)}$. Then $\Psi:\mathsf{X} \to \GL_2(\R)$ is a continuous function which equals the identity matrix outside a compact set.
Define \(\Upsilon: \mathsf{X} \times \mathsf{X} \to \GL_2(\R)\) as \(\Upsilon(x,y) \coloneqq \Phi(x,y)\Psi(x)\).

We assume without loss of generality that \(N\) is upper semicontinuous (replacing \(N\) by its \(\limsup\) at each point if necessary).
Define a function \(c:\mathsf{X} \times \mathsf{X} \to [0,+\infty)\) by
\begin{equation}\label{e.def_c}
c(x,y) \coloneqq 
\max\left\lbrace N(\widetilde{\Phi}(\tilde{x},\tilde{y})\Psi(x)) \st (\tilde{x},\tilde{y}) \in (\pi\times \pi)^{-1}(x,y))\right\rbrace \, .
\end{equation}
Observe that \(c\) is upper semicontinuous, because it is locally the maximum of finitely many upper semicontinuous functions. 
We are now allowed to apply Theorem \ref{t.lowcost}: note that, being an open subset of a Polish space, \(\mathsf{X}\) is itself Polish.
We obtain a measurable function \(f:\Omega \to \mathsf{X}\) such that \(f_*\mu = \eta\) and \(\int_{\Omega}c(f(\omega),f(T\omega)) \, d\mu(\omega) < \infty \). (The integral can actually be made small, but we will not need this). 

Let \(F(\omega) \coloneqq \Upsilon(f(\omega),f(T \omega))\) for all \(\omega\).  
Note that, by the definition of $c$, we have $N(\Upsilon(x,y)) \le c(x,y)$ for every $(x,y) \in \mathsf{X} \times \mathsf{X}$.
It follows that
\begin{equation}
\int N(F(\omega)) \, d\mu(\omega) \le \int_\Omega c(f(\omega),f(T \omega)) \, d\mu(\omega)
< \infty \, ,
\end{equation}
that is, requirement \eqref{e.super-integrability} is met. 
In particular, by \eqref{e.lowerN}, the cocycle \((T,F)\) has well-defined Lyapunov exponents. We will show that these exponents are \(r_1 , r_2\) and that the corresponding Oseledets subspaces are \(\E_1,\E_2\), where \(f(\omega) = (\E_1(\omega),\E_2(\omega))\).

It follows from the definitions above that,
for all $j \in \{1,2\}$ and $n>0$,
\begin{equation}
\frac{1}{n}\log \big\|F^{(n)}(\omega)|_{ \E_j(\omega)}\big\| =
\frac{1}{n} \sum_{i = 0}^{n-1} \psi_j(f(T^i \omega)) \, .
\end{equation}
By the ergodic theorem, as  $n \to +\infty$  the quantity above converges almost everywhere to $\int \psi_j \circ f \, d\mu = \int \psi_j \, d\eta = r_j$.
A similar argument also applies to $n \to -\infty$, yielding the same limit $r_j$. Therefore, \(\E_j\) is the Oseledets subspace for the Lyapunov exponent \(r_j\), as announced.
\end{proof}

\subsection{Construction of functions with prescribed distributions and small average costs} 

This subsection contains a preliminary result for the proof of Theorem~\ref{t.boundedflexible}.

Recall a \emph{cost function} on a Polish space $X$ is a nonnegative upper semicontinuous $c \colon X \times X \to \R$ that vanishes on the diagonal. 
We say that a cost function $c$ is \emph{symmetric} if $c(y,x) = c(x,y)$ for all $x,y \in X$. 

Let $\eta$ be a Borel probability measure on $X$ and let $b>0$.
We say that a symmetric cost function $c$ \emph{fits the budget $b$} with respect to~$\eta$ if
for any two Borel sets $A$, $B$ such that $\eta(A)>0$, $\eta(B)>0$, and $\eta(A \cup B) = 1$, we have
\begin{equation}
	\inf_{x \in A, \,  y \in B} c(x,y) < b \, .
\end{equation}

\begin{lemma}\label{l.March}
Let $\eta$ be a Borel probability measure on a Polish space $X$.
Let $c$ be a symmetric cost function on $X$ which fits a budget $b$ with respect to~$\eta$.
Then there exists a sequence $(E_n)$ of pairwise disjoint relatively compact subsets of $X$ such that $\eta(\bigsqcup E_n) = 1$ and for every $n$ we have $\eta(E_n)>0$ and 
\begin{equation}
	\sup_{x,y \in \bar{E}_n \cup \bar{E}_{n+1}} c(x,y) < b  \, . 
\end{equation}
\end{lemma}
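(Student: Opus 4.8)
The plan is to first replace $\eta$ by a convenient disjoint cover by ``$b$-small'' compact pieces, and then to reorganize those pieces into a single linear chain, using the budget hypothesis to supply the needed connectivity.

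First, the reduction. Since $X$ is Polish, $\eta$ is tight, so there is an increasing sequence of compact sets $K_1 \subseteq K_2 \subseteq \cdots$ with $\eta(\bigcup_m K_m)=1$. Because $c$ is upper semicontinuous and vanishes on the diagonal, the set $W\coloneqq\{c<b\}$ is open and contains the diagonal of $X\times X$; hence each point $p$ admits $r>0$ with $\overline{B(p,r)}\times\overline{B(p,r)}\subseteq W$. Covering each $K_m$ by finitely many such balls and intersecting with $K_m$ gives, by compactness of $K_m$ and upper semicontinuity of $c$, finitely many compact sets on which $\sup c<b$; collecting these over all $m$ yields a countable family $G_1,G_2,\dots$ Disjointifying, $D_i\coloneqq G_i\setminus\bigcup_{j<i}G_j$, and discarding the $\eta$-null ones (and, harmlessly, replacing each surviving $D_i$ by $\supp(\eta|_{D_i})$), we obtain pairwise disjoint relatively compact Borel sets $D_1,D_2,\dots$ with $\eta(D_i)>0$, with $\sup_{\overline{D_i}\times\overline{D_i}}c<b$, and with $\eta(\bigsqcup_i D_i)=1$. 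Note that, since $c$ is upper semicontinuous and $\overline{D_n}\cup\overline{D_{n+1}}$ is compact, the desired conclusion $\sup_{x,y\in\overline{E_n}\cup\overline{E_{n+1}}}c(x,y)<b$ follows as soon as $c<b$ pointwise on $(\overline{E_n}\cup\overline{E_{n+1}})^2$. So it remains to reorder the $D_i$ (after possibly splitting them and inserting small bridging pieces) so that consecutive pieces have $b$-small union.

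For connectivity, form the graph $\mathcal G$ on $\{D_i\}$ with $D_i\sim D_j$ iff $(D_i\times D_j)\cap W\neq\emptyset$. This graph is connected: otherwise split the index set into a nonempty part $I$ and its nonempty complement with no edges between them, and put $A\coloneqq\bigcup_{i\in I}D_i$, $B\coloneqq\bigcup_{i\notin I}D_i$; then $\eta(A)>0$, $\eta(B)>0$, $\eta(A\cup B)=1$, and $c\ge b$ on $A\times B$, contradicting that $c$ fits the budget $b$. Moreover, if $D_i\sim D_j$, fix a witnessing pair $(x,y)\in D_i\times D_j$ with $c(x,y)<b$; by upper semicontinuity, replacing $D_i,D_j$ by small sub-pieces near $x$ and $y$ (nonempty and of positive measure, since $\eta|_{D_i}$ has full support on $D_i$) produces a genuinely $b$-small bridge connecting a chunk of $D_i$ to a chunk of $D_j$. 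Thus abstract adjacency can always be realized by a valid two-piece bridge, and any two sub-pieces of a single $D_i$ automatically have $b$-small union (both sit inside the $b$-small set $\overline{D_i}$).

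Finally, assemble the sequence: take a spanning tree of $\mathcal G$, fix an enumeration of its vertices, and form a walk that visits every vertex by concatenating the tree-paths $D_1\!\to\!D_2\!\to\!D_3\!\to\cdots$. Partition each $D_i$ into a ``bulk'' plus one ``port'' per incident tree-edge (ports taken near distinct witnessing points so they are disjoint), realize each step of the walk through the corresponding pair of ports, and consume the bulk and ports of $D_i$ as the walk revisits it; all transitions are $b$-small by the previous paragraph. Then delete any piece that has become $\eta$-null after the disjointifications — a deleted null piece always has its two neighbors lying in a common $D_i$ or being the two ports of a common tree-edge, so the chain survives — and relabel to get $(E_n)$.

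The main obstacle is exactly this last assembly step: converting connectivity into an honest linearly ordered, pairwise disjoint, positive-measure partition whose consecutive unions are $b$-small. The delicate points are (i) guaranteeing that no piece is forced to carry zero measure, and (ii) controlling the multiplicity with which the walk revisits a given $D_i$ — potentially infinitely often, which forces us to subdivide $D_i$ into infinitely many positive-measure sub-pieces and hence to exploit non-atomicity there, while a purely atomic $D_i$ must instead be arranged to have tree-degree at most two (here the one-dimensional nature of the cost functions arising in the application, whose adjacency graph is essentially an indifference graph and thus carries a consecutive ordering, is what keeps the combinatorics under control). The topology and the budget hypothesis itself are the easy parts.
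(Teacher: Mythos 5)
You follow the paper's route step for step: tightness together with upper semicontinuity of $c$ and its vanishing on the diagonal yields a countable disjoint family of relatively compact, positive-measure, $b$-small pieces of full measure; the budget hypothesis makes the adjacency graph connected; witnessing pairs plus small product neighborhoods give bridges; and a walk through the graph is to be converted into the chain $(E_n)$. The gap is the one you yourself flag: that conversion is never actually carried out, and the devices you propose for it are not available under the lemma's hypotheses --- splitting a piece that the walk revisits (possibly infinitely often) into many disjoint positive-measure ports uses non-atomicity of $\eta$, which is not assumed, while the fallback for atomic pieces explicitly invokes the ``one-dimensional'' cost of the application, i.e.\ structure outside the statement being proved. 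This is not a finessable technicality: in the stated generality the conclusion can simply fail. Take $X=\{v_0,v_1,v_2,\dots\}$ discrete (hence Polish), $\eta(\{v_n\})=2^{-n-1}$, and the symmetric cost with $c(v_0,v_n)=b/2$ for $n\ge 1$, $c(v_m,v_n)=b$ for distinct $m,n\ge 1$, and $c=0$ on the diagonal; this $c$ is (trivially) upper semicontinuous and fits the budget $b$, since any $A,B$ of positive measure with $\eta(A\cup B)=1$ either both contain $v_0$ (cost $0$) or one contains $v_0$ and the other some $v_n$ (cost $b/2$). But an admissible chain would have each $E_k$ containing at most one of the $v_n$ with $n\ge1$, no two consecutive $E_k$ containing such points, and only one $E_k$ containing $v_0$; since every $E_k$ has positive measure, a chain with at least four terms would force two consecutive terms carrying distinct $v_m,v_{m'}$ with $m,m'\ge1$, at mutual cost $b$. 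Hence the chain has at most three terms and cannot exhaust the infinitely many atoms, contradicting $\eta(\bigsqcup E_n)=1$. So no assembly argument, however clever, closes this gap without extra hypotheses.

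In fairness, the paper's own proof is exactly this walk-with-ports construction and is silent at the same spot: the final displayed sequence $A_{n_1},\ A_{n_1}\cap V_1,\ A_{n_2}\cap W_1,\ A_{n_2},\dots$ is not even pairwise disjoint as written, and pruning it while keeping every term of positive measure and preserving the consecutive condition is precisely the difficulty you identified (deleting a null port would join a $V_i$-port of one piece to the bulk of the next, which is not controlled). So you have located a genuine weak point of the argument rather than merely failed to finish it. To obtain a proof you can actually use, either add a hypothesis under which your reservation scheme works --- for instance $\eta$ non-atomic, so that each set $A_m\cap V$ around a witnessing point splits into countably many positive-measure parts and fresh ports can be reserved for every revisit --- or prove the lemma only for costs of the form $c(x,y)=|h(x)-h(y)|$ with $h$ Borel, which is all that Theorem~\ref{t.boundedflexible} requires (there $h(x)=\log\sin\tfrac12\angle(x_1,x_2)$): in that case the pieces can be taken to be preimages of intervals, the budget condition controls the gaps of the essential range of $h$, and the chain can be written down directly in the order of the real line, with no graph combinatorics at all.
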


\begin{proof}
Let $b>0$ be the given budget for the symmetric cost function $c$.
Every point of $X$ has a compact neighborhood $U$ such that $c|_{U \times U} < b$.
Since $\eta$ is tight, we can find a sequence $(U_n)$ of such neighborhoods such that $c|_{U_n \times U_n} < b$ for each $n$ and $\eta(\bigcup U_n) = 1$.
Define sets $A_n \coloneqq U_n \setminus \bigcup_{j=0}^{n-1} U_j$. Then the $A_n$'s are relatively compact, pairwise disjoint, their union has full measure, and $c|_{\bar{A}_n \times \bar{A}_n} < b$ for each $n$. 
Modifying the sequence if necessary, we can assume that $\eta(A_n)>0$ and $\supp \eta|_{\bar{A}_n} = \bar{A}_n$ for each $n$.

Consider the graph whose vertices are the $A_n$'s and there is an edge $A_n - A_m$ iff  
\begin{equation}
	\inf_{x \in A_n, \,  y \in A_m} c(x,y) < b  \, .
\end{equation}
Since $c$ fits the budget $b$, our graph is connected. 
Therefore, there exists a path $A_{n_1} \to A_{n_2} \to \cdots$ that visits all vertices. 

For each $i$, there exists $x_i \in A_{n_i}$ and $y_i \in A_{n_{i+1}}$ such that $c(x_i,y_i) < b$.
So we find compact neighborhoods $V_i$ and $W_i$ of $x_i$ and $y_i$, respectively, such that $c|_{V_i \times W_i} < b$.
Now let $E_1,E_2,\dots$ be the following sequence of sets:
\begin{equation}
A_{n_1}, \  A_{n_1} \cap V_1, \  A_{n_2} \cap W_1, \ 
A_{n_2}, \  A_{n_2} \cap V_2, \  A_{n_3} \cap W_2, \ 
A_{n_3}, \  \dots
\end{equation}
Then the sequence $(E_n)$ meets all requirements. 
\end{proof}

We now establish the following variant of Theorem~\ref{t.lowcost}:

\begin{theorem}\label{t.boundedcost}
Let $T$ be an ergodic automorphism of a non-atomic Lebesgue probability space $(\Omega,\mathcal{S},\mu)$.
Let $\eta$ be a Borel probability measure on a Polish space $X$.
Let $c$ be a symmetric cost function on $X$ which fits a budget $b$ with respect to~$\eta$.
Then there exists a measurable function $f \colon \Omega \to X$ such that $f_* \mu = \eta$ and 
\begin{equation}
c(f(\omega),f(T \omega)) < b \text{ for every \(\omega \in \Omega\).}
\end{equation}
\end{theorem}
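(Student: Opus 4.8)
The plan is to run a skyscraper (Kac/Alpern--Prasad) construction analogous to the proof of Theorem~\ref{t.lowcost}, but using the decomposition of $X$ provided by Lemma~\ref{l.March} instead of a convex decomposition of $\eta$ into compactly supported pieces. First, I would apply Lemma~\ref{l.March} to obtain a sequence $(E_n)$ of pairwise disjoint relatively compact sets with $\eta(\bigsqcup E_n)=1$, $\eta(E_n)>0$ for all $n$, and $\sup_{x,y \in \bar E_n \cup \bar E_{n+1}} c(x,y) < b$ for every $n$. Write $p_n \coloneqq \eta(E_n)>0$, so $\sum_n p_n = 1$. The key feature here, compared to Theorem~\ref{t.lowcost}, is that the ``cost of moving between consecutive pieces'' is already strictly below the budget $b$ uniformly, so no slowing-down is needed to make the average small --- instead we need the itinerary of $f(T^n\omega)$ through the $E_n$'s to move only between pieces with \emph{consecutive indices}, so that \emph{every} jump (not just the average) costs less than $b$.

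To arrange this, I would build a skyscraper whose towers are indexed so that as $\omega$ climbs a tower and then re-enters the base, the value of $f$ passes through $E_1, E_2, \ldots, E_m$ and back down through $E_{m-1},\ldots$, i.e. the index changes by exactly $\pm 1$ at each time step. Concretely: choose a tower-height distribution $\pi=(\pi_k)$ supported on even heights $k$ such that a tower of height $k=2m$ is used to realize a ``round trip'' $E_1 \to E_2 \to \cdots \to E_m \to E_{m-1} \to \cdots \to E_1$ (or a similar palindromic itinerary visiting $E_1,\dots,E_m$). The measures $p_n = \eta(E_n)$ must be matched up by the construction: the set of times (across all towers) at which $f$ takes values in $E_n$ must have total $\mu$-measure $p_n$. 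Since a tower of height $2m$ visits $E_n$ for two of its $2m$ levels whenever $n \le m$ (once going up, once coming down), and $\pi_{2m}$ is the measure of that tower, we need $\sum_{m \ge n} \tfrac{1}{m}\,\pi_{2m}\cdot(\text{number of visits to }E_n\text{ per unit base measure})$ to equal $p_n$; this is a triangular linear system in the unknowns $\pi_{2m}$ that can be solved recursively for the $\pi_{2m}$ (after possibly reindexing the palindromic path so that the bookkeeping is clean), giving a nonnegative summable sequence with $\sum \pi_k = 1$. One then has to ensure $\gcd\{k : \pi_k \neq 0\} = 1$; since all heights are even this fails, so I would instead allow heights $2m$ and $2m+1$ (appending one ``stay in $E_1$'' step, which costs $c(x,x)=0<b$, to some towers), which makes the gcd $1$ while keeping all jumps between consecutive-index pieces or within a single piece. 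Now invoke Alpern--Prasad to realize a base $B$ with tower measures $\pi$, and on each level of each tower let $f$ take values in the appropriate $E_n$ with the conditional distribution $\eta|_{E_n}$ renormalized, using Lemma~\ref{l.basic} on that level. By construction $f_*\mu = \eta$ and every consecutive pair $(f(\omega),f(T\omega))$ lies in some $\bar E_n \cup \bar E_{n+1}$ (or in a single $\bar E_n$), hence $c(f(\omega),f(T\omega)) < b$ for every $\omega$ in the full-measure part of the skyscraper; on the null complement we can set $f$ arbitrarily, or better, extend the itinerary measurably so the bound holds everywhere.

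The main obstacle I anticipate is the bookkeeping that simultaneously (i) matches the prescribed weights $p_n = \eta(E_n)$, (ii) keeps all index-jumps equal to $\pm1$ (or $0$), and (iii) achieves $\gcd = 1$. The cleanest way to handle (i)--(ii) is probably not a single family of palindromic round trips but a more flexible scheme: use towers of the form ``ascend from $E_1$ to $E_m$, then descend back to $E_1$'' of height $2m-1$ (turning at the top) together with a ``flat'' tower of height $1$ sitting entirely in $E_1$, and introduce, if needed, extra short plateaus $E_n \to E_n$ to gain the flexibility to tune each $p_n$ independently. In any case the linear system relating tower measures to the $p_n$'s is lower-triangular with positive diagonal, so it is solvable, and the resulting $\pi$ automatically sums to $1$ because $\eta$ does; verifying summability and the gcd condition is then routine. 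A secondary, purely technical point is to confirm that on the measure-zero ``roof'' of the skyscraper one can still define $f$ so that the pointwise bound holds for \emph{every} $\omega$, not just $\mu$-a.e.; this is handled by noting that the complement of the skyscraper is $T$-invariant up to measure zero and choosing $f$ constant equal to a fixed point of some $\bar E_n$ there, so that $c(f(\omega),f(T\omega)) = c(\text{pt},\text{pt}) = 0 < b$.
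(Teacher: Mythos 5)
Your overall strategy coincides with the paper's: apply Lemma~\ref{l.March}, build an Alpern--Prasad skyscraper whose towers carry palindromic itineraries through pieces of consecutive index, and define $f$ level by level via Lemma~\ref{l.basic}, so that every step moves within some $\bar E_n \cup \bar E_{n+1}$ and hence costs less than $b$. The one genuine gap is your claim that the lower-triangular system relating tower measures to the weights $p_n=\eta(E_n)$ admits a \emph{nonnegative} summable solution. With purely palindromic round trips (your heights $2m$ or $2m-1$, turning at the top) this is false in general: every tower that visits $E_{n+1}$ visits $E_n$ at least as often, so the occupation time of $E_n$ is automatically at least that of $E_{n+1}$, and no nonnegative choice of tower measures can realize weights with $p_{n+1}>p_n$ (e.g.\ $\eta(E_1)=0.1$, $\eta(E_2)=0.9$). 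Solvability of the triangular system is not the issue; positivity is. Your suggested remedies (plateaus, reindexing) could be made to work, but you do not carry them out, and they complicate both the weight matching and the gcd bookkeeping.

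The paper's resolution is cleaner and worth noting: first replace each normalized piece $\eta|_{E_n}/p_n$ by several equal copies (copies share a support, so the consecutive-pieces property survives) to force the weights to be \emph{strictly decreasing}, $p_0>p_1>\cdots$; then the explicit assignment $\pi_1=p_0-p_1$, $\pi_{2n+2}=(n+1)(p_n-p_{n+1})$ is positive, sums to $1$, and is supported on heights $\{1,4,6,8,\dots\}$, whose gcd is $1$ without any ad hoc height mixing. The $\pm1$ itinerary then comes for free from the label $\ell(\omega)=\min\{|n| : T^n\omega\in B\cup T^{-1}B\}$, which satisfies $|\ell(T\omega)-\ell(\omega)|\le 1$ for \emph{every} $\omega$; choosing $f(\omega)$ in the compact support indexed by $\ell(\omega)$ gives the pointwise bound everywhere, which also disposes of your separate worry about the null part of the skyscraper.
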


In terms the previous travelling analogy, this time the cost of each of our moves cannot exceed a prescribed amount.

\begin{proof} 
It follows immediately from Lemma~\ref{l.March} that the probability measure~$\eta$ can be written as a convex combination $\sum_{n=0}^\infty p_n \eta_n$ of a sequence of probability measures $\eta_n$ of compact supports $K_n$ with the property that, for each~$n$,
\begin{equation}\label{e.lunchtime}
\sup_{x, \, y \in K_n \cup K_{n+1}} c(x,y) < b  \, . 
\end{equation}
Furthermore, we can assume that the sequence of weights is strictly decreasing, that is, $p_0>p_1>\dots$; indeed, it suffices to replace the sequence $\eta_0,\eta_1,\dots$ by 
\begin{equation}
\eta_0, \ 
\underbrace{\frac{\eta_1}{k_1} \ , \ \dots, \ \frac{\eta_1}{k_1}}_{\text{$k_1$ times}} \ , \ 
\underbrace{\frac{\eta_2}{k_2} \ , \ \dots, \ \frac{\eta_2}{k_2}}_{\text{$k_3$ times}} \ , \ 
\dots
\end{equation}
for some appropriate sequence $k_1,k_2,\dots$ and adjust the weights accordingly.

Define a sequence $(\pi_k)_{k \ge 1}$ as follows: 
\begin{alignat}{2}
\pi_1 		&\coloneqq p_0 - p_1  \, , \\ 
\pi_{2n+2}	&\coloneqq (n+1)(p_n-p_{n+1}) \, &\quad&\text{for all $n \ge 1$,} \\ 
\pi_k		&\coloneqq 0 \, &\quad&\text{if $k \not\in S \coloneqq \{1,4,6,8,\dots\}$}.
\end{alignat}
The fact that the sequence $(p_n)_{n \ge 0}$ is strictly decreasing guarantees that $\pi_k > 0$ for every $k \in S$.
Furthermore, 
\begin{equation}
\sum_{k \in S} \pi_k = \sum_{n=0}^\infty (n+1)(p_n-p_{n+1}) = \sum_{n=0}^\infty p_n  = 1 \, .
\end{equation}
Also, $\mathrm{gcd}(S) = 1$.
By the Alpern--Prasad theorem, there exists a positive measure set $B \subseteq \Omega$ such that, for each $k$, the $k$-th tower of the skyscraper with base $B$ has measure $\pi_k$.

For almost every $\omega \in \Omega$, the following quantity is well-defined:
\begin{equation}
\ell(\omega) \coloneqq \min \big\{|n| \st n\in \Z, \ T^n \omega \in B \cup T^{-1} B \big\} \, .
\end{equation}
If the two sided orbit of $\omega$ never hits $B$, let $\ell(\omega) \coloneqq 0$.
The function $\ell$ is measurable and clearly has the property
\begin{equation}\label{e.at_most_one}
|\ell(T\omega) - \ell(\omega)| \le 1 \quad \text{for all $\omega \in \Omega$.}
\end{equation}
We call $\ell(\omega)$ the \emph{label} of the point $\omega$.
Note that labels are constant on each component of each tower of the skyscraper with base $B$: see Figure~\ref{f.sky}.

\begin{figure}
	\begin{tikzpicture}[scale=0.6]
		\draw[very thick](0, 0)--(2, 0) node[near end, above]{$0$};
		\draw[very thick](3, 0)--(5, 0) node[near end, above]{$0$};
		\draw[very thick](3, 2)--(5, 2) node[near end, above]{$1$};
		\draw[very thick](3, 4)--(5, 4) node[near end, above]{$1$};
		\draw[very thick](3, 6)--(5, 6) node[near end, above]{$0$};
		\draw[very thick](6, 0)--(8, 0) node[near end, above]{$0$};
		\draw[very thick](6, 2)--(8, 2) node[near end, above]{$1$};
		\draw[very thick](6, 4)--(8, 4) node[near end, above]{$2$};
		\draw[very thick](6, 6)--(8, 6) node[near end, above]{$2$};
		\draw[very thick](6, 8)--(8, 8) node[near end, above]{$1$};
		\draw[very thick](6,10)--(8,10) node[near end, above]{$0$};
		\draw[->] (4,0.75)--(4,1.25);
		\draw[->] (4,2.75)--(4,3.25);
		\draw[->] (4,4.75)--(4,5.25);
		\draw[->] (7,0.75)--(7,1.25);
		\draw[->] (7,2.75)--(7,3.25);
		\draw[->] (7,4.75)--(7,5.25);
		\draw[->] (7,6.75)--(7,7.25);
		\draw[->] (7,8.75)--(7,9.25);
		\draw (10, 0) node{$...$};
		\draw (10, 2) node{$...$};
		\draw (10, 4) node{$...$};
		\draw (10, 6) node{$...$};
		\draw (10, 8) node{$...$};
		\draw (10,10) node{$...$};
		\draw (10,12) node{$...$};
		\draw[thick, decoration={brace, mirror}, decorate] (0,-1)--(2,-1)
			node[pos=0.5,anchor=north,yshift=-2mm]{$\pi_1$};
		\draw[thick, decoration={brace, mirror}, decorate] (3,-1)--(5,-1)
			node[pos=0.5,anchor=north,yshift=-2mm]{$\pi_4$};
		\draw[thick, decoration={brace, mirror}, decorate] (6,-1)--(8,-1)
			node[pos=0.5,anchor=north,yshift=-2mm]{$\pi_6$};
		\draw[thick, decoration={brace, mirror}, decorate] (11,-0.5)--(11,0.5)
			node[pos=0.5,anchor=west,xshift=2mm]{$B$};
	\end{tikzpicture}
	\caption{The skyscraper with base $B$, labeled according to the function $\ell(\mathord{\cdot})$.}\label{f.sky}
\end{figure}

Let $L_n \coloneq  \ell^{-1}(n)$ be the set of points with label $n$.
Let us compute $\mu(L_n)$.
Note that the first tower (of height $1$) is labeled $0$, half of the second tower (of height $4$) is labeled $0$, one third of the third tower (of height $6$) is labeled $0$, etc., so 
\begin{equation}
\mu(L_0) = \pi_1 + \frac{\pi_4}{2} + \frac{\pi_6}{3} + \cdots = (p_0-p_1) + (p_1-p_2) + (p_2-p_3) + \cdots = p_0 \, ,
\end{equation}
Similarly, for each label $n\ge 1$,
\begin{equation}
\mu(L_n) = \frac{\pi_{2n+2}}{n} + \frac{\pi_{2n+4}}{n+1} + \cdots = (p_n - p_{n+1}) + (p_{n+1} - p_{n+2}) + \cdots = p_n \, .
\end{equation}
That is, $\mu(L_n) = p_n$ for every $n \ge 0$.
Since the sets $L_n$ form a mod~$0$ partition of $\Omega$, using Lemma~\ref{l.basic} on each partition element, we can construct a measurable map $f \colon \Omega \to X$ such that $f_* (\mu|_{L_n}) = p_n \eta_n$ for each $n \ge 0$.
In particular, \(f_*\mu = \eta\).   
Recalling that $K_n = \supp(\eta_n)$, we can also assume that, for each $n$, if $\omega \in L_n$, then $f(\omega) \in K_n$. In this case, it follows from \eqref{e.at_most_one} that \(f(T \omega) \in K_{n-1} \cup K_n \cup K_{n+1}\) (where \(K_{-1} \coloneq \emptyset\)), and in particular \(c(f(\omega),f(T \omega)) < b \).
\end{proof}

\subsection{Flexibility for bounded cocycles}\label{ss.bounded}

Here we use the previous material to give the:

\begin{proof}[Proof of Theorem~\ref{t.boundedflexible}]
We retrace the steps of proof of Theorem~\ref{t.flexible}, with a few modifications. 
Define $\widetilde{\mathsf{X}}$ and $\widetilde{\Phi}$ exactly as before. 
We claim that for all \(\tilde{x} = (u_1,u_2), \tilde{y}=(v_1,v_2) \in \widetilde{\mathsf{X}}\), the singular values of the matrix $\widetilde{\Phi}(\tilde{x},\tilde{y})$ are
\begin{equation}\label{e.sing_val}
\frac{\sin (\theta'/2)}{\sin (\theta/2)} \quad \text{and} \quad
\frac{\cos (\theta'/2)}{\cos (\theta/2)} \, ,
\end{equation}
where $\theta = \angle(u_1,u_2)$, $\theta' = \angle(v_1,v_2)$.
To see this, consider the rhombus in $\R^2$ with sides of length $1$ and vertices $0$, $u_1$, $u_1+u_2$, $u_2$.
This rhombus is mapped by $\widetilde{\Phi}(\tilde{x},\tilde{y})$ to the rhombus  with sides of length $1$ and vertices $0$, $v_1$, $v_1+v_2$, $v_2$.
The diagonals of the first rhombus have lengths $\sin (\theta/2)$ and $\cos (\theta/2)$, and are mapped to 
the diagonals of the second rhombus of respective lengths $\sin (\theta'/2)$ and $\cos (\theta'/2)$.
Since diagonals of any rhombus are orthogonal, the singular value decomposition of $\widetilde{\Phi}(\tilde{x},\tilde{y})$ becomes apparent, and the claim follows. 

The next step is to choose a measurable map $\rho \colon \mathsf{X} \to \widetilde{\mathsf{X}}$ such that $\pi \circ \rho$ is the identity;  this time, we impose an extra condition:
\begin{equation}
	\rho(\mathsf{X}) \subseteq \left\{ (u_1,u_2) \in \widetilde{\mathsf{X}} \st \angle(u_1,u_2) \le \tfrac{\pi}{2} \right\} \, .
\end{equation}
(Note that, while the angle between two lines is by definition at most $\tfrac{\pi}{2}$, the angle between two nonzero vectors ranges from $0$ to $\pi$).
This precaution ensures that $\angle(u_1,u_2) = \angle(x_1,x_2)$ if $(u_1,u_2) = \rho(x_1,x_2)$.
Then, as before, let \(\Phi \coloneqq \widetilde{\Phi}\circ (\rho \times \rho)\). 
We claim that, for any $x = (x_1,x_2), y = (y_1,y_2) \in \mathsf{X}$, writing  $\theta = \angle(x_1,x_2)$, $\theta' = \angle(y_1,y_2)$, the matrix $g = \Phi(x,y)$ satisfies:
\begin{equation}\label{e.norms_bound}
\log \max \big( \|g\|, \|g^{-1}\| \big) = 
\left| \log \sin \frac{\theta'}{2} - \log \sin \frac{\theta}{2} \right| \, .
\end{equation}
Indeed, the singular values $\|g\|$ and $\|g^{-1}\|^{-1}$ are given by the previous expressions \eqref{e.sing_val} (up to switching). 
An application of the mean value theorem shows that
\begin{equation}
\left| \log \cos \frac{\theta'}{2} - \log \cos \frac{\theta}{2} \right|  \le 
\left| \frac{\theta'}{2} -  \frac{\theta}{2} \right| \le 
\left| \log \sin \frac{\theta'}{2} - \log \sin \frac{\theta}{2} \right| \, .
\end{equation}
Equality \eqref{e.norms_bound} follows. 

The functions $\Psi$ and $\Upsilon$ are defined exactly as before but, but the function $c$ is now defined as:
\begin{equation}
c(x,y) \coloneqq \left| \log \sin \frac{\angle(y_1,y_2)}{2} - \log \sin \frac{\angle(x_1,x_2)}{2} \right| \, ,
\end{equation}
which is the quantity \eqref{e.norms_bound}.
This $c$ is a symmetric cost function on $\mathsf{X}$.
The bounded gap assumption on the measure $\eta$ means that there exists $b>0$ such that $c$ fits a budget $b$ with respect to $\eta$.
Applying Theorem \ref{t.boundedcost}, we obtain a measurable function \(f:\Omega \to \mathsf{X}\) such that \(f_*\mu = \eta\) and 
\begin{equation}\label{e.Wednesday}
\sup_{\omega \in \Omega} c(f(\omega),f(T\omega)) < b \, .
\end{equation}

As before, let  
\begin{equation}
F(\omega) \coloneqq \Upsilon(f(\omega),f(T \omega)) = \Phi(f(\omega),f(T \omega)) \Psi(f(\omega)) \, .
\end{equation}
By its own definition, the function $\Psi$ is bounded (i.e., takes values into a compact subset of $\mathrm{GL}_2(\R)$). On the other hand, \eqref{e.Wednesday} means that the function $\omega \mapsto \Phi(f(\omega),f(T \omega))$ is bounded. Thus $F$ is bounded. 

The same argument as in the proof of Theorem~\ref{t.flexible} shows that the cocycle $(T,F)$ has the prescribed Lyapunov exponents $r_1,r_2$, and that the Oseledets spaces have joint distribution $\eta$.
\end{proof}

\section{Conclusions and comments} \label{s.conclusion}

In this article, we obtained results on the distribution of the angles between Oseledets subspaces, with a focus on the property of log-integrability.
For reasons of simplicity, we confined ourselves to dimension $2$, but we would expect similar results to hold in higher dimension. 

We showed that in the class of measurable linear cocycles, the distribution of the angles is completely arbitrary. On the other hand, for i.i.d.\ random products of matrices (one-step cocycles), we proved that the log-integrability of the angles follows from finiteness of the second moment, and we showed that this hypothesis is indispensable. 

A distinguishing feature of the i.i.d.\ setting  is that the distribution of the unstable direction $\mathbf{E}_1$ is a \emph{stationary measure} with respect to the matrix distribution $\nu$ (while the stable direction $\mathbf{E}_2$ is stationary with respect to the distribution of the inverse matrices). 

Stationary measures have been intensively studied.  
For example, they are known to be exact dimensional \cite{HS} (see also \cite{LL} and \cite{rapaport} for higher dimension). 
Their regularity properties have been studied in \cite{kaufmann} and also in \cite{GKM,Monakov}, which also apply to nonlinear dynamics.  
As for the dimension of the support of the stationary measures, see \cite{CJ}.   

In view of Theorem~\ref{t.flexible}, it would be interesting to understand whether some type of weak independence between the factors of a random matrix product would yield exact dimension of the distribution of Oseledets subspaces (see e.g.\ \cite{ledrappierpositive} and \cite{AvilaViana} for some notions of weak independence and corresponding results on simplicity of Lyapunov exponents).  By analogy with \cite{Feng}, one might expect that exact dimension holds for ergodic cocycles \((T,F)\) such that \(F\) takes only finitely many different values.

\medskip

The distribution of the angles between Oseledets subspaces has been studied experimentally  \cite{experimental1,experimental2}. In Hamiltonian dynamics, the classical Melnikov integral \cite{melnikov} allows to estimate these angles for perturbations of integrable systems. Arnaud \cite{marieclaude} obtained quantitative relations between the Lyapunov exponents and the distribution of the angles in the contexts of twist maps and Tonelli Hamiltonian flows. 

Another important setting consists of diffeomorphisms $f \colon M \to M$ of a compact surface $M$.
If $f$ is $C^\infty$ and topologically mixing with $h_\mathrm{top}(f)>0$, then the distribution of Oseledets angles with respect to the (necessarily unique) measure of maximal entropy satisfies a power bound, and in particular is log-integrable: this follows from 
the recent work of Buzzi, Crovisier, and Sarig \cite[Corollary~1.11 and Lemma~9.3]{BCS}. We thank Snir Ben Ovadia for this observation.


\bigskip

\noindent \textbf{Acknowledgements.} 
Our interest in the question of log-integrability of angles arose from conversations with Alexander Arbieto, Fran\c{c}ois Ledrappier, and Carlos Matheus. We are grateful for their insights. We thank Snir Ben Ovadia and Cagri Sert for valuable observations and references, and Nicolas Mart\'inez Ramos for several corrections. Finally, we thank the referee for corrections and suggestions. 


\bibliographystyle{alpha}
\bibliography{biblio}  

\end{document}